\theoremstyle{plain}
\newtheorem{Thm}{Theorem}
\newtheorem{Pro}[Thm]{Proposition}
\newcommand{\WW}{\mathcal{W}}
\DeclareMathOperator{\Prop}{\mathrm{PROP}}
\newcommand{\LEM}{\textrm{LEM}\xspace}
\newcommand{\WLEM}{\textrm{WLEM}\xspace}
\newcommand{\DNE}{\textrm{DNE}\xspace}
\newcommand{\DGP}{\textrm{DGP}\xspace}
\newcommand{\DGPimp}{\ensuremath{\textrm{DGP}^\implies}\xspace}
\newcommand{\EFQ}{\textrm{EFQ}\xspace}
\newcommand{\WT}{\textrm{WT}\xspace}
\newcommand{\PP}{\textrm{PP}\xspace}
\newcommand{\KP}{\textrm{KP}\xspace}
\newcommand{\SmL}{\textrm{SmL}\xspace}
\renewcommand{\mid}{\, \middle| \,}
\newcommand{\menge}[1]{\ensuremath{\left\{ #1 \right\}}}
\newcommand{\set}[2]{\mbox{$ \left\{ \,#1 \mid #2 \,\right\}$}}
\newcommand{\Pow}[1]{\mathcal{P}\left(#1\right)} 
\newcommand{\fa}[2]{\forall {#1} : {#2}}
\newcommand{\define}[1]{\textit{#1}}
\newcommand{\cmark}{\ding{51}}
\newcommand{\xmark}{\ding{55}}
\newcommand{\ext}[1]{\llbracket #1  \rrbracket}
\renewcommand{\implies}{\rightarrow}
\renewcommand{\iff}{\leftrightarrow}
\newcommand{\mimplies}{\mathchoice{\Longrightarrow}{\Rightarrow}{\Rightarrow}{\Rightarrow}}
\newcommand{\miff}{\mathchoice{\Longleftrightarrow}{\Leftrightarrow}{\Leftrightarrow}{\Leftrightarrow}}
\begin{document}
\author{Hannes Diener and Maarten McKubre-Jordens}
\title{Classifying Material Implications over Minimal Logic}
\maketitle

\abstract{The so-called paradoxes of material implication have motivated the development of many non-classical logics over the years \cite{aA75,nB77,aA89,gP89,sH96}. In this note, we investigate some of these paradoxes and classify them, over minimal logic. We provide proofs of equivalence and semantic models separating the paradoxes where appropriate. A number of equivalent groups arise, all of which collapse with unrestricted use of double negation elimination. Interestingly, the principle \emph{ex falso quodlibet}, and several weaker principles, turn out to be distinguishable, giving perhaps supporting motivation for adopting minimal logic as the ambient logic for reasoning in the possible presence of inconsistency.}

\providecommand{\keywords}[1]{\vskip 1pc \textbf{\textit{Keywords:}} \ \ #1.}
\providecommand{\acknowledgements}[1]{\textbf{\textit{Acknowledgements.}} \ \ #1.}

\keywords{reverse mathematics; minimal logic; ex falso quodlibet; implication; paraconsistent logic; Peirce's principle}


\section{Introduction}

The project of constructive reverse mathematics \cite{hI06a} has given rise to a wide literature where various theorems of mathematics and principles of logic have been classified over intuitionistic logic. What is less well-known is that the subtle difference that arises when the principle of explosion, \emph{ex falso quodlibet}, is dropped from intuitionistic logic (thus giving (Johansson's) \emph{minimal logic}) enables the distinction of many more principles. The focus of the present paper are a range of principles known collectively (but not exhaustively) as the \emph{paradoxes of material implication}; paradoxes because they illustrate that the usual interpretation of formal statements of the form ``$\ldots\implies\ldots$'' as informal statements of the form ``if\ldots then\ldots'' produces counter-intuitive results.

 Some of these principles were hinted at in \cite{sO08}. Here we present a carefully worked-out chart, classifying a number of such principles over minimal logic. These principles hold classically, and intuitionistically either hold or are equivalent to one of three well-known principles (see Section \ref{sec:intuitionism}). As it turns out, over minimal logic these principles divide cleanly into a small number of distinct categories. We hasten to add that the principles we classify here are considered as \emph{formula schemas}, and not individual instances. For example, when we write the formula $\neg\neg\varphi\implies\varphi$ for double negation elimination (\DNE), we mean that this should apply to all well-formed formulae $\varphi$. The work presented here is thus not a narrowly-focused investigation of what-instance-implies-what-instance, but rather a broad-stroke painting that classifies formula schemas as a whole.

This paper may be received in two ways: straightforwardly, as a contribution to reverse mathematics over non-classical logics; or more subtly as providing some insight into the kinds of distinctions that a good paraconsistent logic might contribute. Highlights of the paper include many refinements over \cite[chapter 6]{vV13}.

In what follows, we take: $\implies$ to be minimal implication; $\bot$ a logical constant not further defined (with the usual identification of $\neg\alpha$ with $\alpha\implies\bot$ for any well-formed formula $\alpha$); $\top$ a logical constant interchangeable with $\alpha\implies\alpha$ for some (arbitrary) well-formed formula $\alpha$ (that is, $\top$ is always satisfied). 

We will be interested in propositional axiom schemas, and instances of such schemas. A \emph{(propositional) axiom schema} is any well-formed formula, where the propositional variables are interpreted as ranging over well-formed formulas. An \emph{instance} of a formula schema is the schema with well-formed formulae consistently substituting for propositional variables in the schema in the intuitively obvious way. For example, the well-formed formula
$$\neg (\alpha \land \beta)  \implies (\alpha \land \beta \implies \neg \gamma)$$
is an instance of the formula schema
$$\neg \varphi  \implies (\varphi \implies \psi)$$
(\ref{PMI5} in what follows), where $\varphi$ is replaced with $\alpha\land\beta$, and $\psi$ is replaced with $\neg\gamma$. We also say that formula schema $\Phi$ \emph{implies} formula schema $\Psi$ if, given any instance $\psi$ of $\Psi$, there are finitely many instances $\{\varphi_1,\varphi_2,\dots,\varphi_n\}$ of $\Phi$ such that
$$\varphi_1,\varphi_2,\dots,\varphi_n \mimplies \psi,$$
where $\mimplies$ is understood as the (meta-theoretic) minimal logic consequence relation. We also say that $\Phi$ and $\Psi$ are equivalent if both $\Phi$ implies $\Psi$ and vice versa. Often it is theoretically useful to distinguish instance-wise implication from schematic implication \cite{hI16}, which is why we restrict ourselves to instance-wise proofs.

\section{Paradoxes of material implication}

The paradoxes we classify are the following schemas. Where $\varphi, \psi, \beta, \vartheta$ are any well-formed formulas,
\begin{enumerate} 
\item \label{PMI1} $(\varphi \implies \psi) \vee (\psi \implies \vartheta)$ \hfill (linearity, strong form)
\item \label{PMI2} $(\varphi \wedge \neg \varphi) \implies \psi$ \hfill (\emph{ex contradictione quodlibet}; the paradox of entailment)
\item \label{PMI3} $\varphi \implies (\psi \vee \neg \psi)$
\item  \label{PMI4} $ (\varphi \implies \psi) \vee \neg \psi$

  \item \label{PMI5} $\neg \varphi  \implies (\varphi \implies \psi) $
  \item \label{PMI6} $(\neg \varphi  \implies \varphi )\implies \varphi $ \hfill (\emph{consequentia mirabilis}; Clavius's law)\footnote{The version $( \varphi  \implies \neg \varphi )\implies \neg \varphi $ is used in \cite{rS95b}. However, since \emph{ex falso quodlibet falsum} ($\bot \implies \neg \phi$) holds in minimal logic, that version is provable over minimal logic. This suggests that while the present work illustrates many distinctions, there are still more distinctions that are not apparent here.}
  \item \label{PMI7}  $((\varphi \wedge \psi) \implies \vartheta) \implies \left( (\varphi \implies \vartheta) \vee (\psi \implies \vartheta) \right)$
\item \label{PMI8} $((\varphi \implies \vartheta) \wedge (\psi \implies \beta)) \implies \left( (\varphi \implies \beta) \vee ( \psi \implies \vartheta) \right) $
  \item \label{PMI9} $(\neg (\varphi \implies \psi)) \implies (\varphi \wedge \neg \psi)$ \hfill (the counterexample principle)
  \item \label{PMI10} $(((\varphi \implies \psi) \implies \varphi) \implies \varphi)$  \hfill (Peirce's Principle (\PP))
 \item \label{DGP} $(\varphi \implies \psi) \vee (\psi \implies \varphi)$  \hfill (Dirk Gently's Principle\footnote{Our name for this is based on the guiding principle of the protagonist of Douglas Adam's novel \emph{Dirk Gently's Holistic Detective Agency} \cite{dA87} who believes in the ``fundamental interconnectedness of all things.'' It also appears as an axiom in G\"odel–Dummett logic, and is more commonly known as (weak) linearity.} (\DGP))
 \item \label{PMI12} $(\neg \neg \varphi \vee \varphi) \implies \varphi$\footnote{The Wikipedia page at \url{https://en.wikipedia.org/wiki/Consequentia_mirabilis} actually lists this as equivalent to \ref{PMI6}, however that is not quite correct as we can see below. If we add the assumption that $\varphi \implies \psi \equiv \neg \varphi \vee \psi$, then they do turn out to be equivalent. However \emph{that} statement---interpreting $\implies$ as material implication---is minimally at least as strong as \LEM.}
 \item \label{PMI13}
      $\psi \vee (\psi \implies \vartheta)$  \hfill  (Tarski's formula)
\item \label{PMI14}
$ (\neg \varphi \implies \neg \psi) \vee (\neg \psi \implies \neg \varphi) $  \hfill  (weak Dirk Gently's Principle)
\item \label{PMI15}
$(\varphi \implies \psi \vee \vartheta) \implies ((\varphi \implies \psi) \vee (\varphi \implies \vartheta)) $
\item \label{PMI16} $ \neg (\varphi \implies \neg \varphi) \implies \varphi $ \hfill  (a form of Aristotle's law\footnote{Connexive logics are closely related. There, instead, the antecedent is taken as axiom schema; thus connexive logics are entirely non-classical, since $\neg(\varphi\implies\neg\varphi)$ is not classical valid.})

\end{enumerate}
Strictly speaking, these are of course axiom schemes rather then single axioms. These sentences are all classical theorems, where the arrow $\implies$ is interpreted as material implication.\footnote{Since all of these axioms are not provable in minimal logic, deducing them in classical logic is not mechanic and they are therefore good exercises for students.} Initially, we are interested how these principles relate to the following basic logical principles, the universal applicability of which is well-known to be rejected by intuitionistic (or, in the third case, minimal) logic:
\begin{description}
  \item[DNE] \label{DNE} $\neg \neg \varphi \implies \varphi$ \hfill (double negation elimination)
  \item[LEM] \label{LEM} $\varphi \vee \neg \varphi$ \hfill (law of excluded middle)
  \item[EFQ] \label{EFQ} $\bot \implies \varphi$ \hfill (ex falso quodlibet)\footnote{Of course this \emph{is} intuitionistically provable/definitional.}
  \item[WLEM] \label{WLEM} $\neg \varphi \vee \neg \neg \varphi$ \hfill (weak law of excluded middle)\footnote{An axiom in Jankov's logic, and De Morgan logic.}
\end{description}
It will turn out that there are two further important distinguishable classes; those related to Peirce's Principle, and those related to Dirk Gently's Principle.

We will also consider the following versions of De Morgan's laws

\begin{description}
	\item[DM1] \label{DM1}
 $\neg (\varphi \wedge \psi) \iff \neg \varphi \vee \neg \psi$
\item[DM2] \label{DM2} $\neg (\varphi \vee \psi) \iff \neg \varphi \wedge \neg \psi$
\item[DM1$^\prime$] \label{DM1p} $ \neg (\neg \varphi \wedge \neg \psi) \iff  \varphi \vee  \psi$
\item[DM2$^\prime$] \label{DM2p} $ \neg (\neg \varphi \vee \neg \psi) \iff  \varphi \wedge  \psi$
\end{description}

Notice that DM2 can be proven in minimal logic.

\section{Equivalents}

\begin{Pro}\label{pro:1}
The following are equivalent:
\begin{enumerate}[a)]
  \item \DNE, \ref{PMI9},  \ref{PMI12}, \ref{PMI16}, DM1$^\prime$, and DM2$^\prime$;
  \item \LEM, \ref{PMI3}, \ref{PMI4},  and \ref{PMI6};
  \item \EFQ, \ref{PMI2}, and \ref{PMI5};
  \item \WLEM, \ref{PMI14}, and DM1
\end{enumerate}
\end{Pro}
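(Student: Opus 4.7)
The plan is to establish each equivalence class by a hub-and-spoke argument, taking the named principles \DNE, \LEM, \EFQ, and \WLEM as the respective hubs. For each class I prove that the hub implies every listed principle and, conversely, that every listed principle implies the hub via a suitable self-instantiation --- almost always one of $\psi := \varphi$, $\psi := \bot$, $\varphi := \top$, or $\psi := \neg\varphi$. The forward implications from \LEM and \WLEM are case analyses; those from \EFQ are direct manipulations with $\bot$.

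For class (c), the round trip is essentially trivial: \ref{PMI5} instantiated at $\varphi := \bot$ (using the minimal tautology $\neg\bot$) gives \EFQ, and the other legs are routine. For class (b), \ref{PMI3} and \ref{PMI4} collapse to \LEM via $\varphi := \top$ (using that $\top \implies \psi$ is minimally equivalent to $\psi$), while \ref{PMI6} yields \LEM when instantiated at $\varphi := \psi \vee \neg \psi$, after the easy observation that $\neg(\psi \vee \neg \psi)$ minimally implies $\neg \psi$ and hence the right disjunct of $\psi \vee \neg \psi$.

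For class (a), I first record the crucial auxiliary fact that \DNE minimally implies \EFQ: from $\bot$ one trivially derives $\neg \neg \varphi$ for every $\varphi$, so \DNE produces $\varphi$. With \EFQ in hand, \DNE $\mimplies$ \ref{PMI9}, \ref{PMI12}, \ref{PMI16} are standard double-negation manipulations, and \DNE $\mimplies$ DM1$^\prime$, DM2$^\prime$ reduce to showing that the negations of the right-hand sides are untenable. The return implications are each by a single substitution: \ref{PMI9} at $\psi := \bot$ is already \DNE; \ref{PMI12} applied to the hypothesis $\neg \neg \varphi$ reads as \DNE directly; \ref{PMI16} requires only the minimal lemma $\neg \neg \varphi \mimplies \neg(\varphi \implies \neg \varphi)$; and both DM1$^\prime$ and DM2$^\prime$ collapse to \DNE under $\psi := \varphi$.

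The main obstacle is class (d), specifically \ref{PMI14} $\mimplies$ \WLEM. The key trick is to instantiate \ref{PMI14} at $\varphi := \chi$, $\psi := \neg \chi$, yielding
$$(\neg \chi \implies \neg \neg \chi) \vee (\neg \neg \chi \implies \neg \chi).$$
Each disjunct collapses minimally to one half of \WLEM by a contraction argument using only $\chi \mimplies \neg \neg \chi$: the first disjunct gives $\neg \neg \chi$ (assuming $\neg \chi$ produces $\neg \neg \chi$ and so $\bot$), and the second gives $\neg \chi$ (assuming $\chi$ produces $\neg \neg \chi$, thence $\neg \chi$, and so $\bot$). The remaining legs of class (d) are routine: \WLEM $\mimplies$ \ref{PMI14} and \WLEM $\mimplies$ DM1 are case analyses on $\neg \varphi \vee \neg \neg \varphi$, and DM1 $\mimplies$ \WLEM comes from $\psi := \neg \varphi$ together with the minimal theorem $\neg(\varphi \wedge \neg \varphi)$.
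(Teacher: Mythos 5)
Your proposal is correct and follows essentially the same route as the paper's proof: the same hub-and-spoke decomposition around \DNE, \LEM, \EFQ, and \WLEM, with the same key instantiations (e.g.\ $\psi := \bot$ in \ref{PMI9}, $\varphi := \psi \vee \neg\psi$ in \ref{PMI6}, $\psi := \neg\varphi$ in \ref{PMI14} and DM1, $\psi := \varphi$ in the primed De Morgan laws). The only cosmetic divergences are that you instantiate \ref{PMI5} at $\varphi := \bot$ where the paper uses $\varphi := \top$, and that you spell out the minimal lemma $\neg\neg\varphi \mimplies \neg(\varphi \implies \neg\varphi)$ for \ref{PMI16} where the paper says ``clearly''; both checks go through.
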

\begin{proof}
\begin{enumerate}[a)]
  \item Clearly $\DNE \miff \ref{PMI12}$ and $\DNE \miff \ref{PMI16}$. To see that $\DNE \mimplies \ref{PMI9}$, assume $\DNE$ and suppose that \begin{equation}\label{eq:dne2}\neg(\varphi\implies\psi).\end{equation} Suppose also, for contradiction, that \begin{equation}\label{eq:dne1}\neg(\varphi\land\neg\psi).\end{equation} Suppose further that $\varphi$ and $\neg\psi$. Adjunction then gives $\varphi\land\neg\psi$, contradicting \eqref{eq:dne1}; and so $\neg\neg\psi$ (discharging the assumption $\neg\psi$). $\DNE$ yields $\psi$, whence $\varphi\implies\psi$ (discharging $\varphi$). This contradicts \eqref{eq:dne2}, and so $\neg\neg(\varphi\land\neg\psi)$. Another application of $\DNE$ yields \ref{PMI9}.
  
   For the converse, suppose $\neg\neg\varphi$; that is, $\neg(\varphi\implies\bot)$. Then by \ref{PMI9}, $\varphi \land\neg\bot$. Hence, $\varphi$. 
   
   Clearly \DNE implies both versions of De Morgan's laws. Conversely we see that for $\psi = \varphi$ they both reduce to \DNE.
   \item $\LEM \miff \ref{PMI3}$, $\LEM \miff \ref{PMI4}$ and $\LEM \mimplies \ref{PMI6}$ are straightforward.\footnote{With judicious substitutions---in particular, using $\varphi \equiv \top$ in the right-to-left implications.}  To see that $\ref{PMI6}\mimplies\LEM$, it is enough to show that $\neg (\varphi \vee \neg \varphi) \implies \varphi \vee \neg \varphi$. So assume $\neg (\varphi \vee \neg \varphi)$. Then $\varphi$ leads to a contradiction, whence $\neg \varphi$ holds. This can be weakened to $\varphi \vee \neg \varphi$ and we are done.
 
  \item Clearly, $\EFQ \mimplies \ref{PMI2} \mimplies \ref{PMI5}$. To see that $\ref{PMI5}\mimplies\EFQ$, note that from $\bot$ we may deduce $\top\implies\bot$ by weakening; that is, $\neg\top$. Applying $\ref{PMI5}$, $\top\implies\varphi$, whence $\varphi$. The deduction theorem then yields $\EFQ$.
 
   \item First we show $\WLEM \miff \ref{PMI14}$. We have that either $\neg \psi$ or $\neg \neg \psi$ holds. In the first case we also have $\neg \varphi \implies \neg \psi$. In the second case assume that $\neg \psi$ holds. So we have $\bot$, which gives $\neg \varphi$ by weakening.

Conversely we either have $ \neg \varphi \implies \neg \neg \varphi$ or $\neg \neg \varphi \implies \neg \varphi$. In the first case the assumption that $\neg \varphi$ holds leads to $\bot$, and hence $\neg \neg \varphi$. In the second case, assuming $\varphi$, we get $\neg\neg\varphi$ and thus $\neg\varphi$, which gives $\bot$ by detachment; therefore, $\neg\varphi$.

Next we show $\WLEM \miff \text{DM1}$. Assume DM1 and let $\varphi$ be arbitrary. Since $\neg(\neg \varphi \wedge \varphi)$ is provable in intuitionistic logic we have $\neg \neg \varphi \vee \neg \varphi  $; that is \WLEM holds. 

Conversely assume that $\neg(\varphi \wedge \psi)$. By \WLEM either $\neg \varphi$ or $\neg \neg \varphi$. It is easy to see that in the second case the assumption that $\psi$ holds leads to a contradiction. Hence $\neg \psi$ and we are done.	

\end{enumerate}
\end{proof}
Next, we single out Peirce's Principle ($\ref{PMI10}$, \PP), and Dirk Gently's Principle ($\ref{DGP}$, \DGP). As will be shown in the next section, these principles are strictly distinguishable from the others.
\begin{Pro}\label{pro:2} The following are equivalent:
\begin{enumerate}[(a)]
\item \PP, \ref{PMI1}, and \ref{PMI13};
  \item \DGP, \ref{PMI7}, \ref{PMI8}, and \ref{PMI15}.
\end{enumerate}
\end{Pro}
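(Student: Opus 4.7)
For (a), the plan is to make \ref{PMI13} the ``hub'' and show $\ref{PMI1} \miff \ref{PMI13} \miff \PP$. The equivalence $\ref{PMI13} \miff \ref{PMI1}$ is essentially trivial: from Tarski $\psi \vee (\psi \implies \vartheta)$ one obtains strong linearity by weakening the left disjunct with $\varphi$, and conversely substituting $\varphi \equiv \top$ in \ref{PMI1} reduces the left disjunct to $\psi$ by the minimal equivalence of $\top \implies \psi$ with $\psi$. The direction $\ref{PMI13} \mimplies \PP$ follows by assuming $(\varphi \implies \psi) \implies \varphi$ and applying Tarski at $\varphi, \psi$: the case $\varphi$ is immediate, while the case $\varphi \implies \psi$ yields $\varphi$ by detachment.

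The only nontrivial implication in (a) is $\PP \mimplies \ref{PMI13}$. Here I would apply Peirce's scheme to the instance $P \equiv \psi \vee (\psi \implies \vartheta)$ with second argument $Q \equiv \vartheta$, reducing the goal to $(P \implies \vartheta) \implies P$. Under that hypothesis, $\psi$ yields $P$ by left-or-introduction, hence $\vartheta$ by the hypothesis; so $\psi \implies \vartheta$, and right-or-introduction then delivers $P$. Peirce finishes the job.

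For (b), I would verify each of \ref{PMI7}, \ref{PMI8}, \ref{PMI15} equivalent to \DGP individually (a four-point star). In each instance the direction from \DGP proceeds by the same recipe: assume the antecedent of the target axiom, apply weak linearity to an appropriate pair (respectively $\varphi, \psi$ for \ref{PMI7}; $\vartheta, \beta$ for \ref{PMI8}; $\psi, \vartheta$ for \ref{PMI15}) and discharge each case using the hypothesis to produce one of the two disjuncts. The converse implications each hinge on a substitution that makes the target axiom's antecedent trivially provable and leaves a conclusion equivalent to \DGP: taking $\vartheta \equiv \varphi \wedge \psi$ in \ref{PMI7} trivialises $(\varphi \wedge \psi) \implies \vartheta$ and makes the conclusion $(\varphi \implies \varphi \wedge \psi) \vee (\psi \implies \varphi \wedge \psi)$ equivalent to \DGP on $\varphi, \psi$; taking $\vartheta \equiv \varphi$ and $\beta \equiv \psi$ in \ref{PMI8} turns the conclusion verbatim into \DGP; and taking $\varphi \equiv \psi \vee \vartheta$ in \ref{PMI15} yields $((\psi \vee \vartheta) \implies \psi) \vee ((\psi \vee \vartheta) \implies \vartheta)$, whose two disjuncts respectively entail $\vartheta \implies \psi$ and $\psi \implies \vartheta$, i.e., \DGP on $\psi, \vartheta$.

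The main conceptual obstacle across the whole proposition is spotting the right choice of $Q$ when invoking Peirce for $\PP \mimplies \ref{PMI13}$; once that substitution is seen, the argument is short. Everything else is a matter of finding convenient substitutions that trivialise antecedents together with routine case-analysis, and I expect no essential difficulty in part (b).
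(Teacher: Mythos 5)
Your proposal is correct and follows essentially the same route as the paper: the one nontrivial step, $\PP \mimplies \ref{PMI13}$, uses the identical Peirce instance on $\psi \vee (\psi \implies \vartheta)$ with second argument $\vartheta$, and part (b) uses the same trivialising substitutions ($\vartheta \equiv \varphi \wedge \psi$; $\vartheta \equiv \varphi$, $\beta \equiv \psi$; $\varphi \equiv \psi \vee \vartheta$). The only cosmetic differences are that you organise (a) as a hub at \ref{PMI13} where the paper proves the cycle $\ref{PMI1} \mimplies \PP \mimplies \ref{PMI13} \mimplies \ref{PMI1}$, and you apply \DGP to a slightly different pair in the forward direction for \ref{PMI8}; neither affects the substance.
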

\begin{proof}
\begin{enumerate}[(a)]
\item We show $\ref{PMI1} \mimplies \PP \mimplies \ref{PMI13} \mimplies \ref{PMI1}$.

First, consider $\varphi$, and $\psi$ such that $(\varphi \implies \psi) \implies \varphi$. By \ref{PMI1} either $\top \implies \varphi$ or $\varphi \implies \psi$. In the first case $\varphi$ holds. In the second case we can use the assumption to show that, also, $\varphi$ holds. Together
\[ ((\varphi \implies \psi) \implies \varphi) \implies \varphi \]
so that $\PP$ holds. 

Next, assume \PP, and let $ \psi$ and $\vartheta$ be arbitrary well-formed formulas. By \PP,
\[ (((\psi \vee (\psi \implies \vartheta )) \implies \vartheta) \implies \psi \vee (\psi \implies \vartheta ))\implies \psi \vee (\psi \implies \vartheta ). \] 
We show that the antecedent (and hence, by modus ponens, the consequent) of this holds. So assume $(\psi \vee (\psi \implies \vartheta )) \implies \vartheta)$. Furthermore assume $\psi$. Then $\vartheta$, so (discharging the assumption $\psi$) we have $\psi \implies \vartheta$. This weakens to $\psi \vee (\psi \implies \vartheta)$, so \ref{PMI13} follows. 

Last, since $\implies$ weakens, clearly \ref{PMI13} implies \ref{PMI1}.

\item First, we show $\DGP \miff \ref{PMI7}$. Assume that $(\varphi \wedge \psi) \implies \vartheta$. Now if \DGP holds then either $\varphi \implies \psi$ or $\psi \implies \varphi$.
	In the first case, if $\varphi$ holds, then also $\varphi \wedge \psi$, and hence $\vartheta$ holds. Together that means that in the first case we have $\varphi \implies \vartheta$. Similarly, in the second case $\psi \implies \vartheta$. Conversely, apply \ref{PMI7} to $\vartheta \equiv \varphi \wedge \psi$. Then the antecedent is $\top$, and so $ (\varphi \implies (\varphi \wedge \psi)) \vee (\psi \implies (\varphi \wedge \psi))$. Hence the desired $(\varphi \implies \psi) \vee (\psi \implies \varphi)$ holds.
	
	Next, we show $\DGP \miff \ref{PMI8}$. Assume that $(\varphi \implies \vartheta) \wedge (\psi \implies \beta)$. By \DGP either $\varphi \implies \beta$ and we are done, or $\beta \implies \varphi$. But in that second case if we assume $\psi$ also $\beta$ holds, which in turn implies $\varphi$, which in turn implies $\vartheta$. Together, in the second case, $\psi \implies \vartheta$. Conversely, apply \ref{PMI8} to $\vartheta \equiv \varphi$ and $\beta \equiv \psi$, which yields 
\[((\varphi \implies \varphi) \wedge (\psi \implies \psi)) \implies \left( (\varphi \implies \psi) \vee ( \psi \implies \varphi) \right)  \ . \] 
Since the antecedent is $(\top \land \top) \equiv \top$, we get the desired $(\varphi \implies \psi) \vee (\psi \implies \varphi)$.
	
	Last, $\DGP \miff \ref{PMI15}$. For the forward direction, suppose that $\varphi\implies\psi\lor\vartheta$. $\DGP$ gives $(\psi\implies\vartheta)\lor(\vartheta\implies\psi)$. In the first case, assuming $\varphi$, we get $\psi\lor\vartheta$, which (by modus ponens on $\psi\implies\vartheta$) is $\vartheta$ in this case. So $\varphi\implies\vartheta$, which weakens to $(\varphi\implies\psi)\lor(\varphi\implies\vartheta)$. In the second case, a similar argument also shows $(\varphi\implies\psi)\lor(\varphi\implies\vartheta)$. Either way the consequent of $\ref{PMI15}$ holds; whence, by the deduction theorem, $\ref{PMI15}$. Conversely, apply \ref{PMI15} to $\varphi \vee \psi$, $\varphi$, and $\psi$ to get:
	\[ (\varphi \vee \psi \implies \varphi \vee \psi) \implies ((\varphi \vee \psi \implies \varphi ) \vee (\varphi \vee \psi \implies \psi)) \ .\] Now clearly the antecedent is always satisfied. So we have that 
	\[ (\varphi \vee \psi \implies \varphi ) \vee (\varphi \vee \psi \implies \psi) \ , \] which is equivalent to the desired $(\psi \implies \varphi ) \vee (\varphi \implies \psi) $. 
\end{enumerate}
\end{proof}

%
%

We are now in a position to lay out how these principles fit together.
\begin{Pro}
The implications in Figure \ref{fig:1} hold:
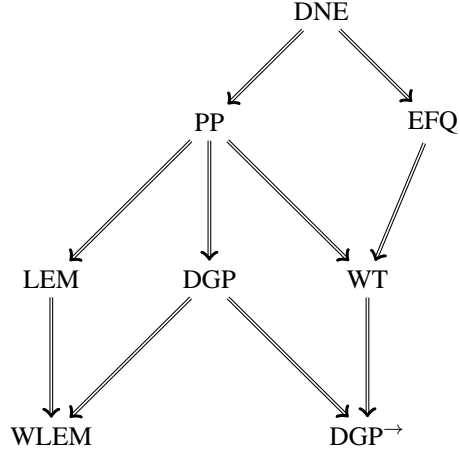
\begin{figure}[ht]\centering
\begin{tikzpicture}[node distance=.1\textheight, auto]
  \node (DNE) {\DNE};
  \node (PP) [below left of=DNE] {\PP}; 
  \node (EFQ) [below right of=DNE] {\EFQ};
  \node (DGP) [below of=PP] {\DGP};
  \node (LEM) [left of=DGP] {\LEM};
  \node (WLEM) [below  of=LEM] {\WLEM};
  \node (WT) [right of=DGP] {\WT};
  \node (DGPi) [below of=WT] {\DGPimp};

  \draw[double,->] (PP) to node {} (DGP);
  \draw[double,->] (DNE) to node {} (PP);
  \draw[double,->] (PP) to node {} (LEM);
  \draw[double,->] (DGP) to node {} (WLEM);
  \draw[double,->] (LEM) to node {} (WLEM);
  \draw[double,->] (DNE) to node {} (EFQ);
  \draw[double,->] (EFQ) to node {} (WT);
  \draw[double,->] (PP) to node {} (WT);
  \draw[double,->] (WT) to node {} (DGPi);
  \draw[double,->] (DGP) to node {} (DGPi);
\end{tikzpicture}
\caption{Some principles distinguishable over minimal logic. As is shown in Section \ref{sec:semantics}, none of the arrows can be reversed and no arrow can be added.}\label{fig:1}
\end{figure}
\end{Pro}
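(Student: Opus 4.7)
The plan is to verify each of the arrows in Figure~\ref{fig:1} individually, leveraging the equivalences already established in Propositions~\ref{pro:1} and~\ref{pro:2}. I will treat the arrows in three groups by difficulty: those that follow by pure substitution into an equivalent schema, those that require a short derivation using \DNE, and those involving the auxiliary principles \WT and \DGPimp.

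For the first group, $\PP \mimplies \LEM$ follows by taking the equivalent form $(\varphi\implies\psi)\vee(\psi\implies\vartheta)$ of \PP (schema~\ref{PMI1}) and substituting $\vartheta\equiv\bot$, obtaining the instance $(\varphi\implies\psi)\vee\neg\psi$, which is schema~\ref{PMI4}, already equivalent to \LEM\ by Proposition~\ref{pro:1}(b). In the same equivalent form of \PP, substituting $\vartheta\equiv\varphi$ yields $(\varphi\implies\psi)\vee(\psi\implies\varphi)$, which is \DGP; so $\PP \mimplies \DGP$. The arrow $\LEM \mimplies \WLEM$ is immediate by weakening $\varphi$ to $\neg\neg\varphi$ in one disjunct. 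For $\DGP \mimplies \WLEM$, substituting $\psi\equiv\neg\varphi$ in \DGP\ gives $(\varphi\implies\neg\varphi)\vee(\neg\varphi\implies\varphi)$; from the first disjunct, contraction yields $\neg\varphi$; from the second, assuming $\neg\varphi$ produces a contradiction and hence $\neg\neg\varphi$; so \WLEM\ holds.

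For the \DNE-based arrows, $\DNE \mimplies \EFQ$ is immediate, since from $\bot$ any $\neg\neg\varphi$ follows trivially, and \DNE\ then delivers $\varphi$. For $\DNE \mimplies \PP$ I would assume $(\varphi\implies\psi)\implies\varphi$ and argue by double negation: supposing $\neg\varphi$, one uses the \EFQ\ that \DNE\ already gives to deduce $\varphi\implies\psi$, whence $\varphi$ by the assumption, contradicting $\neg\varphi$; thus $\neg\neg\varphi$, and \DNE\ closes the argument. The remaining arrows involve \WT\ and \DGPimp: using the defining schemas set up elsewhere in the paper, $\DGP \mimplies \DGPimp$ and $\WT \mimplies \DGPimp$ should be straightforward weakenings, while $\EFQ \mimplies \WT$ and $\PP \mimplies \WT$ should follow from short substitutional arguments analogous to those above.

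The main obstacle is not any single derivation---each is designed to be a short minimal-logic deduction---but rather the bookkeeping: one must confirm that each argument lives inside the instance-wise consequence relation defined in the introduction (so that the substitutions used to specialise \PP\ or \DGP\ really produce permissible instances), and one must be vigilant about not tacitly invoking \EFQ\ in the arrows that sit below \EFQ. The claim that no arrow can be reversed and no arrow added is the substantive content, but it is deferred to the semantic separations of Section~\ref{sec:semantics} and lies outside the present proposition.
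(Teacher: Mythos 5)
Your proposal is correct, but it routes several arrows differently from the paper. Where the paper proves $\PP \mimplies \LEM$ and $\PP \mimplies \DGP$ by direct natural-deduction arguments (constructing the antecedent of a suitable instance of \PP and detaching), you observe that \PP is equivalent to schema~\ref{PMI1} by Proposition~\ref{pro:2}, so that substituting $\vartheta := \bot$ yields an instance of~\ref{PMI4} (hence \LEM by Proposition~\ref{pro:1}) and substituting $\vartheta := \varphi$ yields \DGP outright; this is cleaner and makes the instance-wise bookkeeping transparent, at the cost of leaning on the earlier equivalences. Your $\DNE \mimplies \PP$ also differs: you derive $\varphi\implies\psi$ from $\neg\varphi$ via the \EFQ that \DNE supplies, whereas the paper goes through the counterexample principle~\ref{PMI9}; both are sound. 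The arrows you handle explicitly all check out, including $\DGP\mimplies\WLEM$, which matches the paper's argument. The one place you are too brisk is the final group: the paper defers $\EFQ\mimplies\WT$, $\PP\mimplies\WT$, $\WT\mimplies\DGPimp$ and $\DGP\mimplies\DGPimp$ to a separate proposition in Section~\ref{Sec:impl_fragment}, and $\PP\mimplies\WT$ in particular is not a ``short substitutional argument'' in the paper --- it uses \PP in the form $((\bot\implies\vartheta)\implies\bot)\implies\bot$ with two nested applications of the deduction theorem. If you want to keep your substitutional style there, the honest route is via $\PP\miff\ref{PMI13}$: from $\psi\vee(\psi\implies\vartheta)$, the assumptions $\neg\psi$ and $\neg(\psi\implies\vartheta)$ each refute a disjunct, giving $\bot$ and hence \WT. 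You should write that (and the equally short $\EFQ\mimplies\WT$ and the two \DGPimp arrows) out rather than assert them.
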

That $\DNE \mimplies \EFQ$ and $\LEM \mimplies \WLEM$ is clear; we prove the remaining implications.
\begin{proof}
\begin{itemize}
\item $\DNE \mimplies \PP$: Assume $(\varphi\implies\psi)\implies\varphi$, and $\neg\varphi$. Modus tollens gives $\neg(\varphi\implies\psi)$ whence, by counterexample (\ref{PMI9}) (see Proposition \ref{pro:1}), $\varphi\land\neg\psi$; so $\varphi$. With $\neg\varphi$, this gives $\bot$; and hence (discharging the assumption $\neg\varphi$) $\neg\neg\varphi$. Appying $\DNE$ gives $\varphi$.
\item $\PP \mimplies \LEM$: Assume $\neg(\varphi\lor\neg\varphi)$; that is, $(\varphi\lor\neg\varphi) \implies \bot$. Then $\varphi$ leads to a contradiction; so $\neg\varphi$. But then $\varphi\lor\neg\varphi$. So we have $((\varphi\lor\neg\varphi)\implies\bot)\implies(\varphi\lor\neg\varphi)$. Applying \PP, $\varphi\lor\neg\varphi$.
\item $\PP \mimplies \DGP$: By \PP,
$$\left(\left(\left(\left(\varphi\implies\psi\right)\lor\left(\psi\implies\varphi\right)\right) \implies \varphi \right) \implies \left(\left(\varphi\implies\psi\right)\lor\left(\psi\implies\varphi\right)\right)\right) \implies \left(\left(\varphi\implies\psi\right)\lor\left(\psi\implies\varphi\right)\right).$$
We show that the antecedent holds. Assume 
\begin{equation}\label{eq:ass1}\left(\left(\varphi\implies\psi\right)\lor\left(\psi\implies\varphi\right)\right) \implies \varphi,\end{equation}
and suppose $\psi$. Then $\varphi\implies\psi$, so by \eqref{eq:ass1}, $\varphi$. Thus (discharging the assumption of $\psi$), $\psi\implies\varphi$. But then again by \eqref{eq:ass1}, $\varphi$, which weakens to $\left(\varphi\implies\psi\right)\lor\left(\psi\implies\varphi\right)$ and we are done.
\item $\DGP \mimplies \WLEM$: By \DGP, we have $(\varphi \implies \neg\varphi) \lor (\neg\varphi \implies \varphi)$. In the former case, assuming $\varphi$ gives $\bot$, whence $\neg\varphi$. In the latter case, assuming $\neg\varphi$ gives $\bot$, whence $\neg\neg\varphi$.
\end{itemize}
\end{proof}

\section{The implicational fragment} \label{Sec:impl_fragment}

For various technical reasons it is often interesting to work only with formulas built up from propositional symbols including $\bot$ with $\implies$. Of course, we still use $\neg$ as an abbreviation for $\implies \bot$. Assuming classical logic (\DNE) this is no restriction, since there $\vee$ and $\wedge$ are definable from $\implies$ and $\neg$. More precisely we can define $\varphi \vee \psi$ as $\neg \varphi \implies \psi$, but over minimal logic this validates \EFQ.\footnote{Simply use $\lor$-introduction and the proposed translation; \EFQ follows. To translate $\varphi\lor\psi$ as $\neg\varphi\implies\psi$ would therefore be disingenuous.} A more faithful (but slightly weaker) translation is:
\begin{equation}\label{eq:veetrans}
\varphi \vee \psi := (\varphi \implies \bot) \implies (\psi \implies \bot) \implies \bot \qquad \left[ \equiv \neg \varphi \implies \neg\neg \psi \right].
\end{equation}
Notice that we might also translate $\varphi \vee \psi$ as $\neg \psi \implies \neg\neg \varphi$, whose equivalence to \eqref{eq:veetrans} is minimally provable; we will use whichever of the two translations is more expedient. Moreover, conjunction can be removed entirely also: $\varphi\land\psi\implies\vartheta$ translates to $\varphi\implies\psi\implies\vartheta$,\footnote{Note that negated conjunction is a special case.} and $\vartheta\implies\varphi\land\psi$ translates to the two separate cases $\vartheta\implies\varphi$ and $\vartheta\implies\psi$.\footnote{It is not clear, however, that this makes no difference in \emph{proofs}; more on this later.} Translates of formulas are then defined in the obvious way. We denote the translation of a formula $\varphi$ into the implication-only fragment by $\varphi^\implies$.

The following are of special note:
\begin{itemize}
  \item \ref{PMI13}, a strong form of linearity, which (by Proposition \ref{pro:2} is equivalent to \DNE. Its translation into implicative form is:
  \begin{equation}\tag{Weak Tarski's Formula, \WT}
  \neg \psi \implies \neg \neg (\psi \implies\vartheta) \ ,
  \end{equation}
  which is an abbreviation for $(\psi\implies\bot)\implies((\psi\implies\vartheta)\implies\bot)\implies\bot$.
  
  \item $\ref{DGP}$, Dirk Gently's Principle, which translates to:
  \begin{equation}\tag{Implicative Dirk Gently's Principle, \DGPimp}
  \neg(\varphi\implies\psi)\implies\neg\neg(\psi\implies\varphi).
  \end{equation}
\end{itemize}
The above principles are closely related, but distinct. A separation result can be found in Section \ref{sec:semantics}. 

\begin{Pro} The following implications hold:
\begin{enumerate}[(a)]
  \item $\PP \mimplies \WT$
  \item $\DGP \mimplies \DGPimp$
  \item $\EFQ \mimplies \WT$
  \item $\WT \mimplies \DGPimp$
\end{enumerate}
\end{Pro}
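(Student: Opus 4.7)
The common thread across (a)--(c) is that, although we do not have full \EFQ in minimal logic, we do have the ``weak \EFQ'' facts that $\bot \implies \neg\chi$ and $\bot \implies \neg\neg\chi$ are provable for any formula $\chi$ (the consequent unfolds to a formula with $\bot$ appearing positively, so the hypothesis $\bot$ suffices to discharge it). Since the conclusions of \WT and \DGPimp are themselves of the form $\neg\neg(\cdots)$, this lets us handle ``impossible'' case branches without invoking \EFQ.

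For (a), I would avoid working with Peirce's principle directly and instead use the equivalent Tarski formula $\psi \vee (\psi \implies \vartheta)$ (item \ref{PMI13}, equivalent to \PP by Proposition \ref{pro:2}). Assuming $\neg\psi$, case-split on Tarski's formula: the first disjunct $\psi$ combines with $\neg\psi$ to give $\bot$, from which $\neg\neg(\psi\implies\vartheta)$ follows by the weak \EFQ noted above; the second disjunct weakens directly to $\neg\neg(\psi\implies\vartheta)$. For (b), the same recipe applies with $(\varphi\implies\psi)\vee(\psi\implies\varphi)$ from \DGP, assuming $\neg(\varphi\implies\psi)$, and concluding $\neg\neg(\psi\implies\varphi)$ in each branch. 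For (c), since we now have full \EFQ, we can actually prove $\psi\implies\vartheta$: assume $\neg\psi$; to see $\neg\neg(\psi\implies\vartheta)$, assume further $\neg(\psi\implies\vartheta)$ and derive $\psi\implies\vartheta$ by assuming $\psi$, deriving $\bot$ from $\neg\psi$, and applying \EFQ to obtain $\vartheta$; this contradicts $\neg(\psi\implies\vartheta)$, yielding $\bot$.

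For (d), the plan is to reduce \DGPimp to an application of \WT by extracting $\neg\psi$ from $\neg(\varphi\implies\psi)$. Specifically, assume $\neg(\varphi\implies\psi)$; if $\psi$ held, weakening would give $\varphi\implies\psi$, contradicting our assumption, so $\neg\psi$ holds. Now apply \WT (in the instance with antecedent variable $\psi$ and succedent variable $\varphi$) to conclude $\neg\neg(\psi\implies\varphi)$, which is the required consequent of \DGPimp.

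I do not expect any genuine obstacle; the only trap is the usual one in minimal logic, namely that one must not casually invoke \EFQ when discharging an ``impossible'' case. That is precisely why cases (a) and (b) route through the disjunctive equivalents \ref{PMI13} and \DGP (instead of \PP and \DGPimp's direct statements) and rely on the conclusion being a double negation, so that the minimal-logic fact $\bot\implies\neg\neg\chi$ replaces the missing \EFQ step.
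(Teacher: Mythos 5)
Your proposal is correct, and parts (b)--(d) coincide with the paper's own argument: (b) is exactly the case split on \DGP with $\bot\implies\neg\neg\chi$ in the bad branch and double negation introduction in the good one, and (d) is exactly the paper's extraction of $\neg\psi$ from $\neg(\varphi\implies\psi)$ followed by an application of \WT. The only real divergence is in (a): the paper instantiates \PP directly at $\varphi:=\bot$, i.e.\ in the form $((\bot\implies\vartheta)\implies\bot)\implies\bot$, and carries out a purely implicational derivation (assume $\psi\implies\bot$, $(\psi\implies\vartheta)\implies\bot$, and $\bot\implies\vartheta$; transitivity gives $\psi\implies\vartheta$ and hence $\bot$; discharge and apply \PP). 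You instead route through Proposition \ref{pro:2}, replacing \PP by its equivalent Tarski's formula \ref{PMI13} and doing a disjunction elimination, with $\bot\implies\neg\neg\chi$ (which is indeed minimally provable, being an instance of weakening once $\neg\neg\chi$ is unfolded) handling the branch where $\psi$ holds. Both are valid; your version is arguably more transparent but imports the (already proved) equivalence $\PP\miff\ref{PMI13}$ and leaves the implicational fragment by using $\vee$, whereas the paper's derivation of (a) stays entirely within implication-only reasoning, which is in keeping with the theme of Section \ref{Sec:impl_fragment}. This also explains why the paper can describe (c) as ``(a) with assumption (iii) replaced by an instance of \EFQ'': your direct \EFQ argument for (c) amounts to the same computation. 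No gaps.
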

\begin{proof}
\begin{enumerate}[(a)]
\item We use \PP in the form $((\bot\implies\vartheta)\implies\bot)\implies\bot$. For modus ponens we need to establish that $(\bot\implies\vartheta)\implies\bot$. For the purpose of applying the deduction theorem to show this (and also \WT), assume:
\begin{enumerate}[(i)]
\item $\psi\implies\bot$; 
\item $(\psi\implies\vartheta)\implies\bot$; and
\item $\bot\implies\vartheta$.
\end{enumerate} 
Then by transitivity on (i) and (iii), $\psi\implies\vartheta$. Using (ii), $\bot$. The deduction theorem (discharging (iii)) yields $(\bot\implies\vartheta)\implies\bot$. Applying \PP gives $\bot$, whence (discharging (ii) in another application of the deduction theorem) $((\psi\implies\vartheta)\implies\bot)\implies\bot$. The conclusion follows by yet another application of the deduction theorem.
\item Assume that $\neg (\varphi \implies \psi) $. By \DGP either $\varphi \implies \psi$ or $\psi \implies \varphi$. In the first case we get $\bot$ by modus ponens, and therefore also $\neg \neg ( \psi \implies \varphi)$. In the second case, since minimally $\alpha \implies \neg \neg \alpha$, also $\neg \neg ( \psi \implies \varphi)$. Thus in both cases the conclusion holds.
\item The proof is similar to (a), but simpler, since (iii) is now no longer an assumption but an instance of \EFQ (and so we do not need to explicitly apply \PP).

\item Assume that $\neg(\varphi\implies\psi)$. Then, assuming $\psi$ and weakening leads to a contradiction; so $\neg\psi$. By \WT, $\neg\neg(\psi\implies\varphi)$. The conclusion follows from the deduction theorem.
\end{enumerate}
\end{proof}
In Section \ref{sec:semantics} we show that these implications are strict. For the paradoxes of material implication, using the translation \eqref{eq:veetrans} (and the comments following it), we have:
\begin{itemize}
  \item $\LEM^\implies$ is $\neg\varphi\implies\neg\neg\neg\varphi$, which is provable in minimal logic. Similarly, $\WLEM^\implies$ is provable.
  \item $\ref{PMI1}^\implies$ is $\neg (\varphi \implies \psi) \implies \neg \neg (\psi \implies \vartheta)$.
  \item $\ref{PMI2}^\implies$ is $\varphi\implies\neg\varphi\implies\psi$.
  \item $\ref{PMI3}^\implies$ is $ \varphi \implies \neg \psi \implies \neg\neg\neg\psi $, a weakening of double negation introduction and hence provable.
  \item $\ref{PMI4}^\implies$ is $ \neg (\varphi \implies \psi) \implies\neg\neg \neg \psi$, and is provable.
  \item $\ref{PMI7}^\implies$ is $(\varphi \implies \psi \implies \vartheta) \implies  \neg (\varphi \implies \vartheta) \implies\neg\neg (\psi \implies \vartheta) $.
  \item $\ref{PMI8}^\implies$ is $(\varphi \implies \vartheta) \implies (\psi \implies \beta) \implies \neg (\varphi \implies \beta) \implies\neg\neg ( \psi \implies \vartheta)$.
  \item $\ref{PMI9}^\implies$ is (a) $\neg (\varphi \implies \psi) \implies \varphi$ and (b) $\neg (\varphi \implies \psi) \implies \neg \psi$. The latter is provable.
  \item $\ref{DGP}^\implies$ is $\DGPimp$.
  \item $\ref{PMI12}^\implies$ is $(\neg \neg \neg \varphi \implies \neg\neg\varphi) \implies \varphi $.
  \item $\ref{PMI13}^\implies$ is \WT.
  \item $\ref{PMI14}^\implies$ is $ \neg(\neg \varphi \implies \neg \psi) \implies \neg\neg(\neg \psi \implies \neg \varphi) $. This is provable: assume $ \neg(\neg \varphi \implies \neg \psi)$ and $\neg\psi$. Weakening gives $\neg\varphi\implies\neg\psi$, a contradiction; whence $\neg\varphi$. So $\neg\psi\implies\neg\varphi$, which by double negation introduction gives the conclusion.
  \item $\ref{PMI15}^\implies$ is $(\varphi \implies \neg\psi \implies \neg\neg\vartheta) \implies \neg(\varphi \implies \psi) \implies\neg\neg (\varphi \implies \vartheta) $.
  \item DM$1^\implies$ is $(\varphi \implies \neg\psi) \iff (\neg\neg \varphi \implies \neg\neg\neg \psi)$, which is provable.
  \item DM$2^\implies$ is $\neg (\neg\varphi \implies\neg\neg \psi) \implies \neg \varphi$, $\neg (\neg\varphi \implies\neg\neg \psi) \implies \neg \psi$, and $\neg\varphi\implies\neg\psi\implies\neg (\neg\varphi \implies\neg\neg \psi)$, each of which is provable.
  \item DM$1^{\prime\implies}$ is $ (\neg \varphi \implies \neg\neg \psi) \iff  (\neg \varphi \implies\neg\neg  \psi)$, which is $\top$ (always satisfied).
  \item DM$2^{\prime\implies}$ is (a) $ \neg (\neg\neg \varphi \implies\neg\neg \neg \psi) \implies  \varphi$, (b) $ \neg (\neg\neg \varphi \implies\neg\neg \neg \psi) \implies \psi$, and  (c) $ \varphi\implies\psi\implies\neg (\neg\neg \varphi \implies\neg\neg \neg \psi)$. The latter, (c), is provable.
\end{itemize}
It is often technically useful to know when an operator may be pulled back through an implication, so two further sentences of interest are:
\begin{enumerate}\setcounter{enumi}{16}
\item \label{PMI17} $(\varphi\implies\neg\neg\psi)\implies\neg\neg(\varphi\implies\psi)$.
\item \label{PMI18} $\neg\neg(\varphi\implies\psi)\implies(\neg\neg\varphi\implies\psi)$.
\end{enumerate}
Note:
\begin{itemize}
\item The converse of \ref{PMI17} is provable. For, assume $\neg\neg(\varphi\implies\psi)$. Further assume
\begin{enumerate}[(i)]
\item $\varphi$;
\item $\neg\psi$; and
\item $\varphi\implies\psi$.
\end{enumerate}
(i) and (iii) lead to $\bot$; whence (discharging (iii)) $\neg(\varphi\implies\psi)$. But then $\bot$ again, so (discharging (ii)) $\neg\neg\psi$. Applying the deduction theorem twice gives the converse of \ref{PMI17}.
\item Likewise, the converse of $\ref{PMI18}$ is provable. Assume $\neg\neg\varphi\implies\psi$ and $\varphi$. Then $\neg\neg\varphi$, so $\psi$ and hence, in fact, $\varphi\implies\psi$, which is stronger than the converse of \ref{PMI18}.
\end{itemize}
We now turn to classifying the foregoing sentences, whenever they are not provable in minimal logic alone.
\begin{Pro} The following are equivalent:
\begin{enumerate}[(a)]
  \item $\DNE$, $\ref{PMI9}^\implies$(a), $\ref{PMI12}^\implies$, DM$2^{\prime\implies}$(a), DM$2^{\prime\implies}$(b), $\ref{PMI18}$.
  \item $\EFQ$, $\ref{PMI2}^\implies$.
  \item $\WT$, $\ref{PMI1}^\implies$, $\ref{PMI15}^\implies$, $\ref{PMI17}$.
\end{enumerate}
\end{Pro}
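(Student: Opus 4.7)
I would show that each of classes (a), (b), (c) is an equivalence cluster by proving every principle in the cluster equivalent to a canonical representative: $\DNE$, $\EFQ$, and $\WT$ respectively. The backward directions ``$X \mimplies {}$canonical'' typically follow from a well-chosen substitution that collapses the extra structure, while the forward directions combine the canonical principle's strength with the minimally-provable principle $\bot \implies \neg \alpha$.

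For class (a), each principle reduces to $\DNE$ by substitution: $\ref{PMI9}^\implies$(a) at $\psi := \bot$ (since $\neg\neg\varphi \equiv \neg(\varphi \implies \bot)$); $\ref{PMI12}^\implies$ by noting that $\neg\neg\varphi$ entails $\neg\neg\neg\varphi \implies \neg\neg\varphi$ by weakening; $\ref{PMI18}$ at $\varphi := \psi$, using that $\neg\neg(\psi \implies \psi)$ is always provable; and DM$2^{\prime\implies}$(a), (b) at $\psi := \varphi$ (resp.\ $\varphi := \psi$), using the minimal-logic fact $\neg\neg\neg\varphi \iff \neg\varphi$ to contradict $\neg\neg\varphi$. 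Each forward direction $\DNE \mimplies X$ is short, using Proposition \ref{pro:1} and that $\DNE \mimplies \EFQ$. For (b), $\EFQ \mimplies \ref{PMI2}^\implies$ is immediate by modus ponens, and conversely $\ref{PMI2}^\implies$ at $\varphi := \top$ yields $\EFQ$ since from $\bot$ we obtain $\neg\top$ by weakening.

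For class (c), I proceed around the cycle $\WT \miff \ref{PMI1}^\implies$, $\WT \miff \ref{PMI17}$, and $\ref{PMI17} \miff \ref{PMI15}^\implies$. The first is simple: forward, derive $\neg\psi$ from $\neg(\varphi \implies \psi)$ by weakening and apply $\WT$; backward, substitute $\varphi := \top$. For $\WT \miff \ref{PMI17}$, the forward direction uses that under $\neg\psi$ one can establish $\psi \implies \neg\neg\vartheta$ via $\bot \implies \neg\neg\vartheta$; the backward direction derives $\neg\varphi$ from the hypotheses $\varphi \implies \neg\neg\psi$ and an auxiliary $\neg(\varphi \implies \psi)$ and applies $\WT$. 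The direction $\ref{PMI17} \mimplies \ref{PMI15}^\implies$ is a chain: extract $\neg\beta$ from $\neg(\alpha \implies \beta)$, derive $\alpha \implies \neg\neg\gamma$ from the first antecedent, and conclude with $\ref{PMI17}$.

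The main obstacle will be $\ref{PMI15}^\implies \mimplies \ref{PMI17}$. The trick is to instantiate $\ref{PMI15}^\implies$ at $(\alpha, \beta, \gamma) := (\varphi, \psi, \psi)$, giving
$$(\varphi \implies \neg\psi \implies \neg\neg\psi) \implies \neg(\varphi \implies \psi) \implies \neg\neg(\varphi \implies \psi).$$
The first antecedent follows from the hypothesis $\varphi \implies \neg\neg\psi$ of $\ref{PMI17}$ by weakening, so we are left with $\neg(\varphi \implies \psi) \implies \neg\neg(\varphi \implies \psi)$. Any statement of the form $\neg P \implies \neg\neg P$ minimally entails $\neg\neg P$ (assume $\neg P$, derive $\neg\neg P$, contradict to $\bot$, discharge), which delivers the conclusion of $\ref{PMI17}$.
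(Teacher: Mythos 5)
Your proposal is correct and follows the paper's overall plan---reduce each cluster to its canonical representative $\DNE$, $\EFQ$, $\WT$---and parts (a) and (b) coincide with the paper's proof down to the very same substitutions ($\psi:=\bot$ in $\ref{PMI9}^\implies$(a), $\varphi:=\psi$ in \ref{PMI18}, the diagonal substitutions in DM$2^{\prime\implies}$(a),(b), and $\varphi:=\top$ in $\ref{PMI2}^\implies$). In part (c) you rewire the equivalences, in one place to good effect. For $\ref{PMI1}^\implies\mimplies\WT$ you substitute $\varphi:=\top$, so that $\neg\psi$ yields $\neg(\top\implies\psi)$ at once and $\WT$ falls out; the paper instead instantiates at $\varphi:=\neg\psi$ and detours through double negation introduction, contraposition, and the converse of \ref{PMI17}, so your argument is noticeably shorter. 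You also hang $\ref{PMI15}^\implies$ off \ref{PMI17} rather than directly off $\WT$: your chain $\ref{PMI17}\mimplies\ref{PMI15}^\implies$ is valid (extract $\neg\psi$ from $\neg(\varphi\implies\psi)$, collapse the first antecedent to $\varphi\implies\neg\neg\vartheta$, apply \ref{PMI17}), and your $\ref{PMI15}^\implies\mimplies\ref{PMI17}$ uses the same diagonal instantiation and the same minimal-logic collapse $(\neg P\implies\neg\neg P)\mimplies\neg\neg P$ that the paper employs in its direct proof of $\ref{PMI15}^\implies\mimplies\WT$, only aimed at \ref{PMI17}; since you also prove $\WT\miff\ref{PMI17}$, the class remains fully connected. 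The one cosmetic slip is that your ``forward''/``backward'' labels for $\WT\miff\ref{PMI17}$ are swapped relative to the order in which the principles are named, but the two arguments themselves are exactly the paper's.
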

\begin{proof}
\begin{enumerate}[(a)]
  \item Since full classical logic is obtained by adding \DNE to minimal logic, it suffices to prove that each numbered principle implies \DNE.
  
   Observe that \DNE is a special case of $\ref{PMI9}^\implies$(a)---namely, the case where $\psi = \bot$.
  
  To see that $\ref{PMI12}^\implies \mimplies \DNE$, assume $\neg\neg\varphi$. By weakening, $\neg\neg\neg\varphi\implies\neg\neg\varphi$, and so by $\ref{PMI12}^\implies$, $\varphi$.
  
  To see that  DM$2^{\prime\implies}$(a), DM$2^{\prime\implies}$(b) each imply \DNE, substitute $\varphi$ for $\psi$; then, assuming $\neg\neg\varphi$, in each case the antecedent is satisfied, and in each case the consequent is $\varphi$.
  
  To see that \ref{PMI18} implies \DNE, substitute $\varphi := \psi$; then the antecedent of \ref{PMI18} is always satisfied, and the consequent is \DNE.
  
  \item It is clear that $\EFQ\mimplies \ref{PMI2}^\implies$. For the converse, suppose we wish to show that $\bot\implies\vartheta$. Assume $\bot$. Then (weakening) $\neg\top$. In particular, substituting $\top$ for $\varphi$ and $\vartheta$ for $\psi$ in $\ref{PMI2}^\implies$ gives
  $$\top\implies\neg\top\implies\vartheta.$$
  Two applications of modus ponens followed by the deduction theorem gives the desired conclusion.
  
  \item Since $\neg(\varphi\implies\psi) \implies\neg\psi$ is provable, transitivity with \WT shows that $\WT \mimplies \ref{PMI1}^\implies$. Conversely, suppose that $\neg\psi$, and $\neg(\psi\implies\vartheta)$. In view of the deduction theorem, we aim to derive $\bot$. Consider the following form of $\ref{PMI1}^\implies$:
  $$\neg(\neg\psi\implies\psi) \implies \neg\neg(\psi\implies\vartheta).$$
  Using the assumption $\neg(\psi\implies\vartheta)$, by the provable version of contraposition we conclude $\neg\neg(\neg\psi\implies\psi)$. Since the converse of \ref{PMI17} is provable, $\neg\psi\implies\neg\neg\psi$. But then, using the assumption $\neg\psi$, $\neg\neg\psi$, which gives $\bot$ and we are done.
  
  To see that $\WT^\implies \mimplies \ref{PMI15}^\implies$, assume:
  \begin{enumerate}[(i)]
  \item $\varphi \implies (\neg\psi \implies \neg\neg\vartheta)$;
  \item $\neg(\varphi \implies \psi)$; and
  \item $\neg (\varphi \implies \vartheta)$.T
  \end{enumerate}
  In light of the deduction theorem, it is enough to prove $\bot$. Suppose $\varphi$. Using (i) now yields $\neg\psi\implies\neg\neg\vartheta$. From (ii) we see $\neg\psi$; whence $\neg\neg\vartheta$. But from (iii), $\neg\vartheta$, a contradiction. Therefore, $\neg\varphi$. Applying $\WT$, $\neg\neg(\varphi\implies\psi)$, which contradicts (ii). Hence $\bot$, and we are done.
  
  Conversely, suppose that $\neg\psi$ and (for the purpose of deriving a contradiction) $\neg(\psi\implies\vartheta)$. Suppose that $\psi$. Then $\bot$, so that $\neg\neg\vartheta$, which weakens to $\neg\vartheta\implies\neg\neg\vartheta$. Discharging the assumption, $\psi\implies \neg\vartheta\implies\neg\neg\vartheta$. Then invoke $\ref{PMI15}^\implies$ in the form
  $$(\psi\implies\neg\vartheta\implies\neg\neg\vartheta) \implies (\neg(\psi\implies\vartheta)\implies\neg\neg(\psi\implies\vartheta))$$
  and detach twice (using the assumptions still in play) to get $\neg\neg(\psi\implies\vartheta)$. This contradicts the assumption, so $\bot$. Hence \WT.\footnote{It may be interesting to note that at least one instance of \emph{contraction} is used in this proof.}
  
  To see that $\WT^\implies \mimplies \ref{PMI17}$, suppose that $\varphi\implies\neg\neg\psi$, and (for contradiction) that $\neg(\varphi\implies\psi)$. From the latter, $\neg\psi$; whence, using the former, $\neg\varphi$. Applying $\WT$, $\neg\neg(\varphi\implies\psi)$, which contradicts the assumption; so $\bot$.
  
  Conversely, suppose that $\neg\psi$. Assuming $\psi$, we get $\bot$, so we may conclude $\neg\neg\vartheta$ and therefore $\psi\implies\neg\neg\vartheta$. Applying \ref{PMI17}, $\neg\neg(\psi\implies\vartheta)$ and we are done.

\end{enumerate}
\end{proof}
\begin{Pro}
The following implications hold:
\begin{enumerate}[(a)]
  \item $\WT \mimplies \ref{PMI7}^\implies$;
  \item $\DGPimp \miff \ref{PMI8}^\implies$;
  \item $\ref{PMI7}^\implies \mimplies \DGPimp$ (using $\wedge$). 
\end{enumerate}
\end{Pro}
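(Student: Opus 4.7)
The plan is to handle the three parts separately; each reduces to a judicious substitution followed by a short natural-deduction argument, and no single step should present a significant obstacle.

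For (a), I would assume the two antecedents $\varphi\implies\psi\implies\vartheta$ and $\neg(\varphi\implies\vartheta)$ of $\ref{PMI7}^\implies$ and first establish $\neg\psi$ as a lemma: temporarily assuming $\psi$, the first antecedent lets one derive $\varphi\implies\vartheta$ by simply chaining implications (given $\varphi$, get $\psi\implies\vartheta$ from assumption, then $\vartheta$ using $\psi$), contradicting the second antecedent. Once $\neg\psi$ is in hand, a direct instance of \WT (with $\psi$ and $\vartheta$ in the roles of its two variables) immediately yields $\neg\neg(\psi\implies\vartheta)$, as required.

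For the equivalence (b), the easy direction is $\ref{PMI8}^\implies\mimplies\DGPimp$: specialise $\vartheta:=\varphi$ and $\beta:=\psi$ in $\ref{PMI8}^\implies$ so that the two antecedents $\varphi\implies\vartheta$ and $\psi\implies\beta$ become the tautologies $\varphi\implies\varphi$ and $\psi\implies\psi$, and after discharging them what remains is precisely $\DGPimp$. Conversely, assume the three antecedents of $\ref{PMI8}^\implies$; $\DGPimp$ applied to $\neg(\varphi\implies\beta)$ gives $\neg\neg(\beta\implies\varphi)$. To derive $\neg\neg(\psi\implies\vartheta)$ from this I would assume, for contradiction, $\neg(\psi\implies\vartheta)$ and, under the further hypothesis $\beta\implies\varphi$, chain $\psi\implies\beta\implies\varphi\implies\vartheta$ to contradict the assumption. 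Hence $\neg(\beta\implies\varphi)$, which together with $\neg\neg(\beta\implies\varphi)$ gives $\bot$ and therefore $\neg\neg(\psi\implies\vartheta)$.

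Part (c) is where the annotation \emph{using} $\wedge$ matters: $\ref{PMI7}^\implies$ is a schema in the $\implies$-only fragment, but the proof will use an instance whose formulas contain $\wedge$, thus leaving that fragment. The idea mirrors the derivation of $\ref{PMI7}\mimplies\DGP$ in Proposition \ref{pro:2}(b): instantiate $\ref{PMI7}^\implies$ with $\vartheta:=\varphi\wedge\psi$. The resulting antecedent $\varphi\implies\psi\implies\varphi\wedge\psi$ is just $\wedge$-introduction, provable in minimal logic, so modus ponens yields $\neg(\varphi\implies\varphi\wedge\psi)\implies\neg\neg(\psi\implies\varphi\wedge\psi)$. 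Since $\alpha\implies\alpha\wedge\beta$ is minimally equivalent to $\alpha\implies\beta$ (via $\wedge$-introduction and $\wedge$-elimination applied inside the implication), the antecedent collapses to $\neg(\varphi\implies\psi)$ and the consequent to $\neg\neg(\psi\implies\varphi)$, which is $\DGPimp$. The only subtlety worth flagging is precisely this departure from the implicational fragment, which is presumably why the statement warns that $\wedge$ is needed; everything else is routine bookkeeping.
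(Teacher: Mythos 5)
Your proof is correct and follows essentially the same route as the paper's: part (a) reduces to one application of \WT after extracting a negation from the two hypotheses, part (b) uses the same substitution $\vartheta:=\varphi$, $\beta:=\psi$ for the easy direction and a single appeal to \DGPimp for the converse, and part (c) instantiates $\ref{PMI7}^\implies$ at $\vartheta:=\varphi\wedge\psi$ exactly as the paper does. The only cosmetic differences are that in (a) you derive $\neg\psi$ and apply \WT directly where the paper first contraposes the goal and derives $\neg\varphi$, and in (b) you apply \DGPimp to the pair $(\varphi,\beta)$ rather than to $(\vartheta,\psi)$, at the cost of one extra double-negation step; both variants are valid in minimal logic.
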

\begin{proof}
\begin{enumerate}[(a)]
  \item  Assume that $\varphi\implies\psi\implies\vartheta$. By interderivability of $\neg\alpha \implies \neg\neg\beta$ and $\neg \beta \implies\neg\neg \alpha$, and in view of the deduction theorem, we may show that $$\neg(\psi\implies\vartheta) \implies \neg\neg(\varphi\implies\vartheta).\footnote{Substitute $\varphi\implies\vartheta$ for $\alpha$, and $\psi\implies\vartheta$ for $\beta$}$$ Assume the antecedent. Transitivity over the assumptions gives $\neg\varphi$. Applying \WT, $\neg\neg(\varphi\implies\vartheta)$ and we are done.
  \item First consider $\varphi$, $\psi$, $\vartheta$, and $\beta$ such that $\varphi \implies \vartheta$, $\psi \implies \beta$, and  $\neg \varphi \implies \beta$. We want to show that $\neg\neg ( \psi \implies \vartheta)$. By $\DGPimp$ it is enough to show $\neg (\vartheta \implies \psi)$. So assume also $\vartheta \implies \psi$, which together with the above assumption implies $\varphi \implies \beta$, but that contradicts the third of the initial assumptions. Thus we can derive $\bot$ and are done.

Conversely, a special case of $\ref{PMI8}^\implies$ is 
\[ (\varphi \implies \varphi) \implies (\psi \implies \psi) \implies \neg (\varphi \implies \psi) \implies\neg\neg ( \psi \implies \varphi)  \ .\]
Since the first to assumptions are tautologies we have
\[  \neg (\varphi \implies \psi) \implies\neg\neg ( \psi \implies \varphi)  \ ,\] which is $\DGPimp$.
\item Let $\varphi$ and $\psi$ be such that $\neg (\varphi \implies \psi)$ and $\neg (\psi \implies \varphi)$. Notice that then also $\neg (\varphi \implies \varphi \wedge \psi )$ and $\neg (\psi \implies \varphi \wedge \psi )$. We want to derive at a contradiction.

Applying  $\ref{PMI7}^\implies$ to $\varphi$, $\psi$, and $\varphi \wedge \psi$ gives us 
\[ (\varphi \implies (\psi \implies (\varphi \wedge \psi))) \implies \neg (\varphi \implies (\varphi \wedge \psi)) \implies \neg \neg (\psi \implies (\varphi \wedge \psi)) \ . \]
Notice that the antecedent is provable minimally, so we have
\[ \neg (\varphi \implies (\varphi \wedge \psi)) \implies  \neg (\psi \implies (\varphi \wedge \psi)) \implies \bot \ , \]
and therefore, applying this to out assumptions we get the desired $\bot$.

\end{enumerate}
\end{proof}

We will comment on the strange status of statement $\ref{PMI7}^\implies$ in the last section.

\section{Separation results (semantics)}\label{sec:semantics}
To show the strictness of the implications summed up in Figure \ref{fig:1} we will use models rather than proof theoretic methods, which is the route taken in \cite{vV13}. We base our semantics for minimal logic  on the one described in \cite{sO08}. More precisely, we consider $(\WW,\sqsubseteq,Q)$ where $(\WW,\sqsubseteq)$ is a partial order and $Q \subseteq \WW$ is a \define{cone}---that is, an upwards closed set. A \define{valuation} $v$ is a monotone mapping from $\WW$ to $\Pow{\Prop}$. We will call the elements of $\WW$ \define{worlds}.

A \define{model} is a pair $(\WW,v)$ and the forcing relation between a model and a formula is defined in almost the same way as for Kripke semantics. That is we set
\begin{align*}
	u \Vdash P \miff P \in v(u) 
\end{align*} 
for propositional formulas and then inductively
\begin{align*}
	u \Vdash \varphi \wedge \psi & \miff u \Vdash \varphi \text{ and } u \Vdash \psi \\
	u \Vdash \varphi \vee \psi & \miff u \Vdash \varphi \text{ or } u \Vdash \psi \\
	u \Vdash \varphi \implies \psi & \miff \fa{y \in \WW}{\left(u \sqsubseteq y \wedge y \Vdash \varphi \mimplies y \Vdash \psi \right)}  \ .
\end{align*} 
A point of difference with the usual Kripke semantics is that we do not assume that $\bot$ is never forced, but that we have 
\[ u \Vdash \bot \miff u \in Q  \ .\]
The intuition behind Kripke style semantics is that we have a multitude of possible worlds ordered by $\sqsubseteq$ with the requirement that if a formula is true in some world it is also true in worlds ``above'' it relative to the order. Each world by itself behaves like a classical model---of course, apart from our treatment of $\bot$. We will call worlds $u$ such that $u \in Q$ \define{abnormal}, and the otherwise \define{normal}. Since we have defined $\neg \varphi$ as $\varphi \implies \bot$,\footnote{This is a point of departure for many other non-classical logics, such as relevant logics, logics of formal inconsistency, and the like. As mentioned, this is a preliminary investigation into the realm of non-classical reverse mathematics more generally; so we stick fairly close to the usual, classical, interpretation of negation.} we get
\[ u \Vdash \neg \varphi  \miff \fa{y \in \WW}{\left(u \sqsubseteq y \wedge y \Vdash \varphi \mimplies y \in Q \right)}  \ , \]
that is $\neg \varphi$ holds in some world  if the only worlds above in which $\varphi$  holds are abnormal ones. 

We write $(\WW,v) \Vdash \varphi$ or simply $\WW \Vdash \varphi$, if $\varphi$ is forced in all worlds of $\WW$. As usual, the valuation $v$ should be clear from the context and will not be mentioned. There is another reason why $v$ for us is actually irrelevant: in the present paper, we only consider \define{full models}. That is, we assume that for every upward closed set $U \subseteq \WW$ there exists a propositional symbol $P_U$ such $u \Vdash P_U \miff u \in U$. 
This excludes many pathological cases of considering complicated structures with trivial valuations. It also means that for any well formed formula $\alpha$ there exists a propositional symbol $P_\alpha$ such that 
\[ \WW \Vdash \alpha \miff \WW \Vdash P_\alpha \ ,\]
which has the advantage, that we only need to consider our axiom schemes to range over propositional symbols and not arbitrary formulas.

Having an arbitrary partial order as our underlying structure is different from \cites{hI16, kW01} which uses tree like-orders---or to be precise finitely branching trees. There is a subtle difference between the two notions as we will explain. On first glance the differences seem minuscule:
\begin{Pro}
Assume that $(\WW,v)$ is a model that is tame in the sense that if $L$ is a maximal chain with maximal element $u$, and $u \sqsubseteq v$, then there exists some maximal chain $L^\prime$ having $v$ as a maximal element and $L \subset L^\prime$.\footnote{This holds if, for example, $\WW$ is finite, or one assumes Zorn's Lemma.}

Then there exists a tree-like model $(\WW^t,v^t)$ such that 
\begin{equation} \label{Eqn:modeltransform}
	\WW \Vdash \alpha \iff \WW^t \Vdash \alpha 
\end{equation} 
for any formula $\alpha$. Furthermore, if $(\WW,v)$ is finite (i.e.\ $\WW$ is finite as a set) then  $\WW^t$ is a finite and finitely branching tree.
\end{Pro}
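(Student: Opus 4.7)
The plan is to unwind $\WW$ into a tree whose nodes are chains in $\WW$. Let $\WW^t$ be the set of non-empty finite chains $C = (u_0 \sqsubset u_1 \sqsubset \cdots \sqsubset u_n)$ in $\WW$, viewed as sequences, ordered by end-extension: $C \sqsubseteq_t C'$ iff $C'$ begins with $C$ as an initial segment of the sequence. This is tree-like because every non-minimal node has a unique immediate predecessor (drop the last term). Now define a projection $\pi \colon \WW^t \to \WW$ by $\pi(C) := \max C$, and push $\WW$'s data forward along $\pi$: set $v^t(C) := v(\pi(C))$, and declare $C$ abnormal iff $\pi(C) \in Q$. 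Monotonicity of $v^t$ and upward closure of the new abnormal set in $\WW^t$ follow immediately from monotonicity of $\pi$.

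The central claim is that for every formula $\alpha$ and every $C \in \WW^t$,
\[
  C \Vdash \alpha \text{ in } \WW^t \quad \miff \quad \pi(C) \Vdash \alpha \text{ in } \WW,
\]
from which \eqref{Eqn:modeltransform} is immediate (applied at every $C$ and combined with the full-model representation of schemas by propositional symbols that the paper has already arranged). I would prove the claim by induction on the structure of $\alpha$. Atomic formulas and $\bot$ hold by construction; $\wedge$ and $\vee$ are routine from the induction hypothesis, since these clauses are ``local'' to a world. The only interesting case is $\alpha = \varphi \implies \psi$: unfolding the forcing clause and invoking the induction hypothesis reduces the matter to the equality $\{\pi(C') : C \sqsubseteq_t C'\} = \{u \in \WW : \pi(C) \sqsubseteq u\}$. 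The inclusion $\subseteq$ is monotonicity of $\pi$; for $\supseteq$, given $u \sqsupseteq \pi(C)$ with $u \neq \pi(C)$, the sequence obtained by appending $u$ to $C$ is a legitimate end-extension of $C$ with maximum $u$ (and if $u = \pi(C)$, take $C' = C$).

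Tameness enters in the reading under which branches of $\WW^t$ ought to correspond to \emph{maximal} chains of $\WW$, as in the tree-like models favoured in \cite{hI16, kW01}. In that setting one needs that every already-maximal initial branch can still be extended upward past any world $w$ sitting above its current top $u$, which is precisely the tameness clause; without it the $\supseteq$-step of the $\implies$-case above could fail. For the finiteness statement, if $\WW$ is finite then the chains in $\WW$ are finite sequences of distinct elements (hence only finitely many), and each chain has at most $|\WW|$ one-step extensions (one for each strict upper bound of its maximum); so $\WW^t$ is a finite, finitely branching tree.

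The main obstacle I anticipate is verifying the $\implies$-step of the induction cleanly, in particular in the infinite case, where one must ensure that every direction of extension above $\pi(C)$ is actually realised by some branch of $\WW^t$. Tameness is exactly the hypothesis that rules out the pathological situation of a maximal chain of $\WW$ that terminates without witnessing all the extensions the $\implies$-clause demands.
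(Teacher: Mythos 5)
Your proof is correct, but it takes a genuinely different route from the paper's. The paper unwinds $\WW$ along \emph{maximal} chains: its worlds are pairs $(u,L)$ with $L$ a maximal chain having $u$ as top element, ordered by inclusion of the chains, and this is exactly where tameness is consumed --- in the ``$\Longrightarrow$'' half of the implication step one must extend a maximal chain topped by $u$ to a maximal chain topped by $v\sqsupseteq u$, which is precisely the tameness clause. You instead take \emph{all} finite chains ordered by end-extension, so the corresponding step is trivial (append $v$ to $C$), and --- as you half-observe --- the tameness hypothesis becomes superfluous: your construction proves the proposition unconditionally. The rest of your argument (push $v$ and $Q$ forward along $\pi(C)=\max C$, induct on $\alpha$, reduce the $\implies$-case to $\set{\pi(C')}{C\sqsubseteq_t C'}=\set{u}{\pi(C)\sqsubseteq u}$) is the same induction as the paper's and is sound; the finiteness claim also goes through. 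What the paper's construction buys in exchange for the tameness hypothesis is economy and fidelity of shape: its branches are exactly the maximal chains of $\WW$, so a join is merely split (as in the paper's figure), whereas your forest duplicates each world once for every finite chain ending at it and acquires extra roots (an isolated copy $(u)$ for every non-minimal $u$), which is harmless for the forcing equivalence but makes $\WW^t$ larger and, if one insists that a tree have a single root, slightly further from a tree. One point worth making explicit in your write-up: both constructions share the caveat the paper raises immediately afterwards, namely that $\WW^t$ need not be full even when $\WW$ is, so the equivalence is for individual formulas interpreted via $v^t$, not for schema validity over a full tree model.
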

\begin{proof}
	Define $\WW^t$ to be 
	\[ \set{(u,L) \in \WW \times \mathcal{P}(\WW)}{L \text{ is a maximal chain with } u \text{ as a maximal element}  } \ , \]
	and set \[ (u,L) \sqsubseteq (v,L^\prime) \miff L \subset L^\prime \ .\]
	Notice that if $(u,L) \sqsubseteq (v,L^\prime)$ then $u \in L^\prime$ and that, in particular, $u \sqsubseteq v$. As our new valuation set $v^t(u,L) = v(u)$.
	
	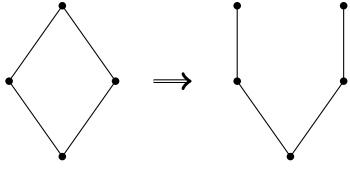
\begin{figure}[h!]
		\begin{tikzpicture}
  \node [circle,fill=black,inner sep=1pt] at (0,0) {};
  \node [circle,fill=black,inner sep=1pt] at (-0.7,1) {};
  \node [circle,fill=black,inner sep=1pt] at (0.7,1) {};
  \node [circle,fill=black,inner sep=1pt] at (0,2) {};
  \draw (0,0) -- (-0.7,1);
  \draw (0,0) -- (0.7,1);
  \draw (0,2) -- (-0.7,1);
  \draw (0,2) -- (0.7,1);

  \draw[->, double ] (1.2,1) -- (1.7,1);
  \node [circle,fill=black,inner sep=1pt] at (3,0) {};
  \node [circle,fill=black,inner sep=1pt] at (2.3,1) {};
  \node [circle,fill=black,inner sep=1pt] at (3.7,1) {};
  \node [circle,fill=black,inner sep=1pt] at (2.3,2) {};
  \node [circle,fill=black,inner sep=1pt] at (3.7,2) {};

  \draw (3,0) -- (2.3,1);
  \draw (3,0) -- (3.7,1);
  \draw (2.3,2) -- (2.3,1);
  \draw (3.7,2) -- (3.7,1);
\end{tikzpicture}

\caption{The construction of this proposition removes any joins.} \label{Fig:modeltransf}
	\end{figure}

	We will proof that for any formula $\alpha$ for all $(u,L) \in \WW^t$
	\begin{equation} \label{Eqn:tree_model}
		(u,L) \Vdash \alpha \miff u \Vdash \alpha		
	\end{equation}
	 by induction on the formula $\alpha$. 
	 If $\alpha$ is a propositional symbol (including $\bot$) then we have \ref{Eqn:tree_model} by our definition of $v^t$. The connectives $\wedge$ and $\vee$ are straightforward to deal with. So let $\alpha \equiv \beta \implies \gamma$. 

	 For the  direction ``$\Longleftarrow$'' fix $(u,L) \in \WW^t$ and assume that $u \Vdash \beta \implies \gamma$. Now consider $(v, L^\prime)$ such that $(u,L) \sqsubseteq (v,L^\prime)$ and that $ (v, L^\prime) \Vdash \beta$. By our induction hypothesis that means $v \Vdash \beta$, which implies that $v \Vdash \gamma$. Using the induction hypothesis again we get the desired $(v,L^\prime) \Vdash \gamma$.
	   
	  For the  direction ``$\Longrightarrow$'' assume that $(u,L) \Vdash \beta \implies \gamma$, and that $v$ is such that $u \sqsubseteq v$  and $v \Vdash \beta$. The only really non-trivial step in this entire proof is to use the tame-ness assumption to find $L^\prime$ which has $v$ as a maximal element and is such that $L \subseteq L^\prime$. Then $(u,L) \sqsubseteq (v,L^\prime)$. 
	  By our induction hypothesis we have that $(v,L^\prime) \Vdash \beta$, which means that $(v,L^\prime) \Vdash \gamma$. Using our induction hypothesis yet again this means that $v \Vdash \gamma$ and we are done.
\end{proof}
The differences between tree-like and non-tree-like models stem from our definition of a full model. Notice that even if $\WW$ is full $\WW^t$ might not be. In the example sketched in Figure \ref{Fig:modeltransf}, in $\WW^t$ there is no proposition that is forced at one of the top nodes, but not the other; all propositions are either forced at both top-nodes at the same time or not forced at both nodes.

If one does prefer to work with tree-like models one cannot restrict to fullness. For example, as one can easily see, any intuitionistic (i.e.\ $Q = \emptyset$) tree-like model containing a branching, i.e. that is not $\mathsf{v}$-free, does not satisfy \WLEM, which means that \emph{intuitionistic} tree-like models cannot distinguish between \WLEM and \DGP.\footnote{Nor can they distinguish between \DNE, \LEM, and \PP (see Proposition \ref{pro:pw-incomp}), but these are all known to be intuitionistically equivalent anyway; see Sec. \ref{sec:intuitionism}.} So for a structural analysis of what principles hold depending on the underlying partial order it makes more sense to consider arbitrary partial orders rather than tree-like ones. This also excludes Veldman's explosive nodes \cite{wV75}, which do not add further distinctions here.

It is straightforward to see that we have soundness \cite[Proposition 2.3.2]{sO08}, which means we can use these models to show the underivability of formulas in minimal logic. Models for intuitionistic logic, that is minimal logic together with \EFQ, are exactly the ones where $\bot$ is never forced (in this case we recover the usual Kripke semantics):
\begin{Pro} \label{Pro:EFGiffQempty}
$\WW \Vdash \EFQ$ if and only if $Q = \emptyset$.	
\end{Pro}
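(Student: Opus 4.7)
The plan is to unwind the semantic clauses for $\implies$ and for $\bot$ and to prove each direction separately, with the forward direction proceeding by contraposition.

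For the direction $(\Leftarrow)$, I would argue that if $Q = \emptyset$ then $\bot$ is never forced, since $u \Vdash \bot$ is defined to hold exactly when $u \in Q$. Consequently, for every $u \in \WW$, every $y \sqsupseteq u$, and every propositional symbol $P$, the premise $y \Vdash \bot$ of the conditional in the implication clause is never met, so the implication $u \Vdash \bot \implies P$ holds vacuously at every world. By the remark following the definition of fullness, it suffices to check the axiom schema $\EFQ$ only on propositional symbols; hence $\WW \Vdash \EFQ$.

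For the direction $(\Rightarrow)$, I would prove the contrapositive: assume $Q \neq \emptyset$ and pick some $u \in Q$. Here is where fullness does the work: the empty set $\emptyset \subseteq \WW$ is trivially upward closed, so fullness provides a propositional symbol $P_\emptyset$ satisfying $w \Vdash P_\emptyset \miff w \in \emptyset$, i.e., $P_\emptyset$ is nowhere forced. Now $u \Vdash \bot$ because $u \in Q$, yet $u \not\Vdash P_\emptyset$, so taking $y = u$ in the implication clause witnesses $u \not\Vdash \bot \implies P_\emptyset$. Thus the instance of $\EFQ$ with $\varphi := P_\emptyset$ fails at $u$, and so $\WW \not\Vdash \EFQ$.

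There is essentially no serious obstacle: the proof is a direct unpacking of definitions. The only point that needs a little care is the appeal to fullness in the forward direction — one has to exhibit an actual proposition on which $\EFQ$ fails, not merely observe that the implication clause is not vacuously satisfied; fullness guarantees the existence of the required nowhere-forced symbol $P_\emptyset$ and justifies reducing the schema to its propositional instances.
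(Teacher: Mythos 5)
Your proof is correct and follows essentially the same route as the paper: the backward direction is the vacuous-truth observation, and the forward direction uses fullness to produce the nowhere-forced symbol $P_\emptyset$ and refute $\bot \implies P_\emptyset$ at an abnormal world. Your write-up merely spells out the details the paper leaves as ``clearly.''
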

\begin{proof}
	Clearly, if $Q = \emptyset$ then $\WW \Vdash \EFQ$. Conversely, if there is an abnormal world $u \in Q$, then by fullness we can consider the propositional symbol $P_\emptyset$ for which $u \Vdash \bot$, but $u \nVdash P_\emptyset$; i.e.\ $u \nVdash \bot \implies P_\emptyset$. 
\end{proof}

Given a structure $(\WW,\sqsubseteq,Q)$ we define its \define{loBOTomy} $\WW^\bot$ to be $(\WW,\sqsubseteq,\WW)$, that is we are making all worlds are abnormal. Even this quite trivial construction has very useful consequences for us.
\begin{Pro} \label{Pro:loBOTomy}
	For any (full) model we have
	\begin{enumerate}
  \item $\WW^\bot \Vdash \LEM$
  \item $\WW^\bot \nVdash \DNE$ 
  \item $\WW^\bot \nVdash \EFQ$ 
\end{enumerate}
\end{Pro}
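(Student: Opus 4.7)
The whole proposition follows from a single observation: in $\WW^\bot$ every world is in $Q$, so $u \Vdash \bot$ for every $u \in \WW$. My plan is to leverage this fact for each of the three parts.

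For part (1), I will show that $\neg\varphi$ is forced at every world, regardless of $\varphi$. Unpacking the clause for implication, $u \Vdash \varphi \implies \bot$ iff every $y \sqsupseteq u$ that forces $\varphi$ also forces $\bot$; but every such $y$ already forces $\bot$ by abnormality, so the condition holds vacuously. Weakening on the right gives $u \Vdash \varphi \vee \neg\varphi$, establishing $\WW^\bot \Vdash \LEM$.

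For parts (2) and (3), I will invoke fullness to pick out the propositional symbol $P_\emptyset$ associated with the empty upward closed set, so that $u \nVdash P_\emptyset$ for every $u$. For part (2), the argument of part (1) shows that $\neg\neg P_\emptyset$ is forced at every world of $\WW^\bot$, while $P_\emptyset$ is forced at none. Hence, at any world $u$ (assuming $\WW$ non-empty, which is tacit throughout), we have $u \sqsubseteq u$, $u \Vdash \neg\neg P_\emptyset$, and $u \nVdash P_\emptyset$, witnessing $u \nVdash \neg\neg P_\emptyset \implies P_\emptyset$, so $\WW^\bot \nVdash \DNE$. For part (3), $u \Vdash \bot$ (since $u \in Q = \WW$) while $u \nVdash P_\emptyset$, so $u \nVdash \bot \implies P_\emptyset$; this can also be read off immediately from Proposition \ref{Pro:EFGiffQempty}, since $Q = \WW \neq \emptyset$.

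There is really no obstacle here: the proof is a direct unpacking of the forcing clauses together with the trick of using $P_\emptyset$ as a ``never-forced'' witness. The only thing to be careful about is that the arguments for (2) and (3) require $\WW$ to be non-empty, which is part of the standing assumption that we are dealing with genuine models.
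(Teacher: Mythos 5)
Your proof is correct and follows essentially the same route as the paper: the key observation that every world of $\WW^\bot$ forces $\bot$, hence forces $\neg\alpha$ for every $\alpha$, gives \LEM, and fullness supplies the never-forced symbol $P_\emptyset$ that witnesses the failure of \DNE and \EFQ. The paper's proof is just a terser version of exactly this argument.
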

\begin{proof} The proofs are easy after one notices that $\WW^\bot \Vdash \neg \alpha$ for \emph{any} formula $\alpha$. It is also worth pointing out that we need the fullness assumption for the second and third part to ensure that there is a propositional symbol $P$ such that $\WW \nVdash P$ and therefore $\WW^\bot \nVdash P$.
\end{proof}

\begin{Pro}
	Let $\WW,v$ be any full model, and let $\alpha$ be a formula not containing $\bot$. Then
	\[ \WW \Vdash \alpha \miff \WW^\bot \Vdash \alpha\]
\end{Pro}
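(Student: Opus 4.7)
The plan is to prove, by induction on the structure of the formula $\alpha$, the stronger pointwise statement: for every world $u \in \WW$ and every formula $\alpha$ not containing $\bot$,
\[
u \Vdash \alpha \text{ in } (\WW,v) \miff u \Vdash \alpha \text{ in } (\WW^\bot, v).
\]
The proposition then follows at once, since $\WW$ and $\WW^\bot$ share the same underlying set of worlds and the same partial order, so forcing at every world on one side is equivalent to forcing at every world on the other.

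The guiding observation is that the set $Q$ of abnormal worlds enters the forcing relation only through the clause $u \Vdash \bot \miff u \in Q$. As long as $\bot$ does not appear as a subformula of $\alpha$, this clause is never invoked in the recursive evaluation of $\alpha$, so altering $Q$ has no effect. The loBOTomy changes only $Q$ (from whatever it is to all of $\WW$) while keeping $v$ and $\sqsubseteq$ fixed, hence the equivalence.

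Carrying out the induction: in the base case $\alpha$ is a propositional symbol, and by hypothesis it is not $\bot$, so $u \Vdash \alpha \miff \alpha \in v(u)$ in both models. The cases $\alpha \equiv \beta \wedge \gamma$ and $\alpha \equiv \beta \vee \gamma$ are immediate from the inductive hypothesis applied to $\beta$ and $\gamma$ (which are themselves $\bot$-free). For $\alpha \equiv \beta \implies \gamma$, both $\beta$ and $\gamma$ are $\bot$-free, so by the inductive hypothesis the forcing relations for $\beta$ and $\gamma$ agree pointwise in $\WW$ and $\WW^\bot$; since the semantic clause
\[
u \Vdash \beta \implies \gamma \miff \fa{y \in \WW}{u \sqsubseteq y \wedge y \Vdash \beta \mimplies y \Vdash \gamma}
\]
quantifies over the same partial order in both models and refers only to forcing of $\beta$ and $\gamma$, the two sides coincide.

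There is no real obstacle here; the only thing to note is that the assumption ``$\alpha$ does not contain $\bot$'' must be interpreted syntactically, so in particular $\alpha$ contains no negations (since $\neg\varphi$ abbreviates $\varphi \implies \bot$). With that reading the induction is entirely routine, and the result has the expected consequence—for example, combined with Proposition~\ref{Pro:loBOTomy} it shows that any $\bot$-free formula forced in $\WW$ is consistent with the failure of \EFQ and of \DNE, witnessed by $\WW^\bot$.
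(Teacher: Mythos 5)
Your proof is correct and follows exactly the route the paper intends: the paper's entire proof is the single line ``Induction on the complexity of formulas,'' and your write-up simply supplies the details of that induction, correctly strengthening to a pointwise claim and correctly observing that the syntactic absence of $\bot$ (hence of $\neg$) means the clause $u \Vdash \bot \miff u \in Q$ is never consulted. Nothing to change.
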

\begin{proof}
Induction on the complexity of formulas.
\end{proof}

We say that a partial order is \define{$\mathsf{v}$-free} if it doesn't contain $a,b,c$ such $a \leqslant b$, $a \leqslant c$, and $b$ and $c$  incomparable. 

\begin{Pro} \label{Pro:models_char_vfree}
Let $(\WW,\sqsubseteq, Q)$ be a structure. 
	\begin{enumerate}
  \item $\WW \Vdash \DGP$ if and only if $\WW$ is $\mathsf{v}$-free.
  \item   $\WW \Vdash \WLEM$ if  $\WW \setminus Q$ is $\mathsf{v}$-free.\footnote{The converse does not hold. See the model $\WW_4$ in Section \ref{sec:intuitionism}.}  
\end{enumerate}
\end{Pro}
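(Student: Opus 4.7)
The plan is to prove the two parts in sequence, exploiting that by fullness every upward-closed subset of $\WW$ is named by some propositional symbol.

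\textbf{Part (1).} For the forward direction I would argue by contrapositive: assume $\WW$ is not $\mathsf{v}$-free and fix $a, b, c \in \WW$ with $a \sqsubseteq b$, $a \sqsubseteq c$, and $b, c$ incomparable. Use fullness to pick propositions $\varphi := P_{\uparrow b}$ and $\psi := P_{\uparrow c}$. At $a$, both candidate implications fail: to refute $\varphi \implies \psi$ take the extension $a \sqsubseteq b$, which forces $\varphi$ but not $\psi$ since $c \not\sqsubseteq b$; refute $\psi \implies \varphi$ symmetrically via $c$. Hence $a \nVdash \DGP$ and so $\WW \nVdash \DGP$.

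For the converse, fix a world $u$ and formulas $\varphi, \psi$, and assume for contradiction that $u$ forces neither $\varphi \implies \psi$ nor $\psi \implies \varphi$. Extract witnesses $u \sqsubseteq y$ and $u \sqsubseteq z$ with $y \Vdash \varphi$, $y \nVdash \psi$ and $z \Vdash \psi$, $z \nVdash \varphi$. Monotonicity of forcing makes $y$ and $z$ incomparable: $y \sqsubseteq z$ would give $z \Vdash \varphi$, and $z \sqsubseteq y$ would give $y \Vdash \psi$. Thus $u, y, z$ form a $\mathsf{v}$ in $\WW$, contradicting $\mathsf{v}$-freeness.

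\textbf{Part (2).} The strategy mirrors the converse of (1), but with $\neg\varphi$ playing the role of $\psi$ so that the witnesses will sit in $\WW \setminus Q$. Fix $u \in \WW$ and a formula $\varphi$, and suppose $u \nVdash \neg\varphi$ and $u \nVdash \neg\neg\varphi$. Unfolding forcing yields $u \sqsubseteq y$ with $y \Vdash \varphi$ and $y \notin Q$, and $u \sqsubseteq z$ with $z \Vdash \neg\varphi$ and $z \notin Q$. Upward-closedness of $Q$ then forces $u \notin Q$ as well. If $y$ and $z$ were comparable, monotonicity together with $z \Vdash \neg\varphi$ would place the larger of the two into $Q$, contradicting either $y \notin Q$ or $z \notin Q$. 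So $y$ and $z$ are incomparable, and $u, y, z$ form a $\mathsf{v}$ inside the induced order on $\WW \setminus Q$, contradicting the hypothesis.

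The only delicate point, concentrated in (2), is to track the normality of the witnesses extracted from $\neg$ and $\neg\neg$ and to invoke upward-closedness of $Q$ to push the apex $u$ out of $Q$, so that the forbidden triple really lives in the subposet $\WW \setminus Q$ to which the hypothesis applies.
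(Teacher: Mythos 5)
Your proof is correct and follows essentially the same route as the paper: the only-if direction of (1) uses the same fullness-supplied witnesses $P_{b^\uparrow}$, $P_{c^\uparrow}$, and the remaining directions rest on the same core observation that two incomparable (normal) worlds above $u$, one forcing $\varphi$ and the other forcing $\psi$ resp.\ $\neg\varphi$, would form a forbidden $\mathsf{v}$. The paper organizes those directions as a direct case analysis (which disjunct holds) rather than a reductio, but the mathematical content is the same.
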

\begin{proof}
\begin{enumerate}
  \item Assume $\WW$ is $\mathsf{v}$-free and let $P$ and $Q$ be arbitrary propositional symbols. Consider an arbitrary world $u \in \WW$. If  $P \in v(u)$ we have that $u \Vdash Q \implies P$. Similarly if $Q \in v(u)$ we have that $u \Vdash P \implies Q$. In both cases $u \Vdash P \implies Q \vee Q \implies P$. So assume that neither $P,Q \in v(u)$. Now consider the sets $A = \set{y \in \WW}{u \sqsubseteq y \wedge P \in v(y)} $ and $B = \set{y \in \WW}{u \sqsubseteq y \wedge Q \in v(y)} $. We must have that either $A \subset B$ or $B \subset A$: for assume there were $z,z^\prime$ such that $z \in A$, $z \notin B$, $z^\prime \in B$, and $z^\prime \notin A$. We must have that $z \not \sqsubseteq z^\prime$, since $z \sqsubseteq z^\prime$ implies that $z^\prime \in B$. Similarly $z^\prime \not \sqsubseteq z$, but then $u,z,z^\prime$ contradict the assumption that $\WW$ is $\mathsf{v}$-free. If  $A \subset B$ we have that for every $u \Vdash Q \implies P$, and if $B \subset A$ we get $u \Vdash P \implies Q$. In both cases $u \Vdash P \implies Q \vee Q \implies P$. Hence we have shown $\WW \Vdash \DGP$.

	Conversely we will show that if $\WW$ is not $\mathsf{v}$-free then $\WW \nVdash \DGP$. So assume there is $a,b,c \in \WW$ such that $a \sqsubseteq b$, $a \sqsubseteq c$, but neither $b \sqsubseteq c$ nor $c \sqsubseteq b$. Let $P_{b^\uparrow}$ and $P_{c^\uparrow}$ be the propositional symbols corresponding to the upwards closed sets $\{x \in\WW : b \sqsubseteq x\}$ and $\{x\in\WW :c \sqsubseteq x\}$, respectively. Notice that $b \Vdash P_{b^\uparrow}$, $b \nVdash P_{c^\uparrow}$, $c \Vdash P_{c^\uparrow}$, and $c \nVdash P_{b^\uparrow}$.
	Assume $a \Vdash (P_{b^\uparrow} \implies P_{c^\uparrow}) \vee (P_{c^\uparrow} \implies P_{b^\uparrow})$. Then either $a \Vdash (P_{b^\uparrow} \implies P_{c^\uparrow})$ or $a \Vdash (P_{c^\uparrow} \implies P_{b^\uparrow})$; w.l.o.g.\ the first case. Then by monotonicity $b \Vdash (P_{b^\uparrow} \implies P_{c^\uparrow})$. Since also $b \Vdash P_{b^\uparrow}$  we have $b \Vdash P_{c^\uparrow}$; a contradiction.
\item Assume $\WW \setminus Q$ is $\mathsf{v}$-free and let $P$ be an arbitrary propositional symbol. Consider an arbitrary world $u \in \WW$. If $u \in Q$ we have $u \Vdash \neg \alpha$ for any $\alpha$, so we may assume that $u \in \WW \setminus Q$. If there is no $u \sqsubseteq y$ such that $P \in v(y)$ and $y \notin Q$ then $u \Vdash \neg P$. So assume that there is $u \sqsubseteq y$ such that $y \notin Q$ and $P \in v(y)$. We want to show that $u \Vdash \neg \neg P$. To do this we will show that for any $u \sqsubseteq z$ if $z \Vdash \neg P$ then $z \in Q$. So assume that $u \sqsubseteq z$,  and $z \notin Q$. Because $\WW \setminus Q$ was assumed to be $\mathsf{v}$-free we either have $z \sqsubseteq y$ or $y \sqsubseteq z$. In the second case, by monotonicity, we get $z \Vdash P$ and therefore $z \Vdash \bot$; a contradiction. In the first case, similarly, by monotonicity $v \Vdash \neg P$ and therefore $y \Vdash \bot$; again a contradiction. So $z \in Q$. Hence we are done, since either $u \Vdash \neg P$ or $u \Vdash \neg \neg P$, so together $u \Vdash \neg P \vee \neg \neg P$. \qedhere
\end{enumerate}
\end{proof}

\begin{Pro}\label{pro:pw-incomp}
	Let $(\WW,\sqsubseteq, Q)$ be a structure. 
	\begin{enumerate}
	
  \item $\WW \Vdash \PP$ if and only if $\WW$ consists of pair-wise incomparable points.
  \item $\WW \Vdash \LEM$ if and only if $\WW \setminus Q$ consists of pair-wise incomparable points.
  \item $\WW \Vdash \DNE$ if and only if $\WW$ consists of pair-wise incomparable points and $Q = \emptyset$.

\end{enumerate}

\end{Pro}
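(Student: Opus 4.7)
The key observation is that when $\WW$ is pairwise incomparable each world $u$ has only itself as a successor, so the forcing clause for $\implies$ collapses to classical implication (with $\bot$ true at $u$ exactly when $u \in Q$). Since $\PP$, $\LEM$, and $\DNE$ are all classical propositional tautologies ($\PP$ and $\LEM$ for any valuation of $\bot$; $\DNE$ only when $\bot$ is false), the right-to-left directions of (1)--(3) are immediate, modulo the extra observation that $\DNE$ additionally requires $Q = \emptyset$. The converse directions will be handled by manufacturing propositional-symbol witnesses via fullness whenever the corresponding antichain condition fails.

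\textbf{Separations.}
For (1), suppose $a, b \in \WW$ with $a \sqsubseteq b$ and $a \neq b$. By fullness pick the propositional symbol $P$ corresponding to the (upward closed) set of strict successors of $a$, and let $R := P_\emptyset$. Then $a \nVdash P$. For any $y \sqsupseteq a$, taking $z := b$ if $y = a$ and $z := y$ otherwise yields $z \sqsupseteq y$ with $z$ a strict successor of $a$, so $z \Vdash P$ while $z \nVdash R$; hence no $y \sqsupseteq a$ forces $P \implies R$. Thus $(P \implies R) \implies P$ is vacuously forced at $a$ while $a \nVdash P$, so $\PP$ fails at $a$. For (2), given $a \sqsubseteq b$ with $a \ne b$ and both in $\WW \setminus Q$, pick $P := P_{\{b\}^\uparrow}$: then $a \nVdash P$ (as $b \not\sqsubseteq a$) and $b$ witnesses $a \nVdash \neg P$ (since $b \Vdash P$ and $b \notin Q$), so $\LEM$ fails at $a$. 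For the remaining normal-world direction of the converse of (2), if $u \notin Q$ and $u \nVdash P$, any $z \sqsupseteq u$ with $z \Vdash P$ must lie in $Q$: otherwise $u$ and $z$ are comparable elements of $\WW \setminus Q$, so pairwise incomparability forces $z = u$, contradicting $u \nVdash P$. Hence $u \Vdash \neg P$.

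\textbf{Part (3) and main obstacle.}
For (3), if $\WW \Vdash \DNE$ then by Figure~\ref{fig:1} also $\WW \Vdash \EFQ$ and $\WW \Vdash \LEM$, whence Proposition~\ref{Pro:EFGiffQempty} and part~(2) give $Q = \emptyset$ and $\WW$ pairwise incomparable. Conversely, when $Q = \emptyset$ and $\WW$ is an antichain, $u \Vdash \neg \neg P$ unfolds to saying that no $y \sqsupseteq u$ forces $\neg P$; in particular $u \nVdash \neg P$, so some $z \sqsupseteq u$ forces $P$, and the antichain condition forces $z = u$, giving $u \Vdash P$. The trickiest step, I anticipate, is the witness construction in~(1): the obvious guess $P := P_{\{b\}^\uparrow}$, $R := P_\emptyset$ breaks whenever $a$ has a successor incomparable with $b$, since that successor would vacuously force $P \implies R$ and thereby spoil the vacuity of $(P \implies R) \implies P$ at $a$. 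Taking $P$ to be forced at \emph{every} strict successor of $a$ kills this problem uniformly.
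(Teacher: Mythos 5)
Your proof is correct and follows essentially the same route as the paper's: the same fullness-based witnesses for the separations (the symbol $P_U$ for the set $U$ of strict successors of $a$ together with $P_\emptyset$ in (1), and $P_{\{b\}^\uparrow}$ in (2)), direct verification on antichains for the converse directions, and part (3) obtained from (1)--(2) via $\EFQ$ and Proposition \ref{Pro:EFGiffQempty}. The only step to make explicit is the case $u \in Q$ in the right-to-left direction of (2) --- there $u \Vdash \neg P$ because $Q$ is upward closed --- though your phrase ``normal-world direction'' indicates you already have this in hand.
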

\begin{proof}
\begin{enumerate}
  \item Assume that $\WW$ consists of pair-wise incomparable points, let $P,S$ be arbitrary propositional symbols, and consider an arbitrary world $u \in \WW$. If $S \in v(u)$ we have that $u \Vdash ((S \implies P) \implies S)\implies S$. If $S \notin v(u)$ we have that $u \Vdash (S \implies P)$, and therefore $u \nVdash ((S \implies P)  \implies S)$. Hence also in that case $u \Vdash ((S \implies P) \implies S)\implies S$.

Conversely assume that there are distinct $u,y \in \WW$ with $u \sqsubseteq y$. Now let $U = \set{u \sqsubseteq y}{u \neq y}$, and consider $P_U$ as above, and let $S = P_\emptyset$. Then for every $u \sqsubseteq y$ with $u \neq y$ we have $y \Vdash (P_U \implies S) \implies P_U$. For $u$ we have that $u \nVdash P_U \implies S$ (since $y \nVdash P_U \implies S $) and hence $u \Vdash (P_U \implies S) \implies P_U$. Now, if $u \Vdash \PP$ we would also have that $u \Vdash P_U$; a contradiction. So we have shown that if $\WW$ contains two comparable worlds then $\PP$ does not hold, or equivalently that if $\PP$ holds then $\WW$ does not contain comparable worlds.
\item Assume that $\WW \setminus Q$ consists of pair-wise incomparable points, let $P$ be an arbitrary propositional symbol, and consider an arbitrary world $u \in \WW$. Now either $P \in v(u)$ and therefore $u \Vdash P$ or $P \notin v(u)$. Since the only worlds above $u \sqsubseteq y$, $u \neq y$ are such that $y \in Q$ and therefore $y \Vdash \bot$ we have that for all $u \sqsubseteq y$ with  $y \Vdash P$ also $y \Vdash \bot$. Hence in that case $u \Vdash \neg P$. 

Conversely assume that there are distinct $u,y \in \WW \setminus Q$ with $u \sqsubseteq y$. Consider $P_{y^\uparrow}$ as above. Then $u \nVdash P_{y^\uparrow}$. But also $u \nVdash \neg P_{y^\uparrow}$, since if $u \Vdash \neg P_{y^\uparrow}$ also $y \Vdash \neg P_{y^\uparrow}$ which would imply $y \Vdash \bot$; a contradiction to $y \notin Q$. Hence $u \nVdash \LEM$. Equivalently we have shown that if $\WW \Vdash \LEM$ then $\WW \setminus Q$ cannot contain two comparable worlds.
\item This follows from Proposition \ref{Pro:EFGiffQempty} together with either of the previous two items and Proposition \ref{Pro:LEM_and_EFQ_iff_DNE} further down. \qedhere
\end{enumerate}
\end{proof}

To start with  one of the most trivial models imaginable, consider $\WW_1=\menge{0}$ and $v_1(0) = \emptyset$. That is there is only one world. By Proposition \ref{Pro:loBOTomy} this means that $$ \LEM \not\mimplies \DNE $$
and $$ \LEM \not\mimplies \EFQ \ . $$

The second structure we are considering is $\WW_2 = (\menge{1,2},\leqslant,\emptyset)$. 
Since $Q = \emptyset$ this is a model of \EFQ. It does not, however, force \DNE, since, if $P$ is such that $\ext{P} = \menge{2}$ then no node forces $\neg P$, whence $\WW_2 \Vdash \neg \neg P$. However, of course, $1 \nVdash P$. The same $P$ also ensures that $\WW_2 \nVdash \LEM$. A useful variant of $\WW_2$ is $\WW_2^\prime = (\menge{1,2},\leqslant,\{2\})$; in that case, with $\ext{P}=\menge{2}$ again, $\DGPimp$ is forced since it is $v$-free; however, $\WT$ is not forced. To see this, note that since $\bot$ is forced whenever $P$ is forced, $1 \Vdash \neg P$. However, since $P\implies S$ is nowhere forced, $1 \Vdash \neg(P\implies S)$. Since $1\nVdash \bot$, $1 \nVdash \neg\neg(P\implies S)$. Hence $\WW_3^\prime\nVdash \WT$. Note that $\LEM,\DGP$, and $\WLEM$ are also forced in this model, separating $\WT$ and $\DGPimp$ from these principles.

The third structure $(\WW_3,\emptyset)$ is a simple $v$ shape, so, for example, $(\{ \emptyset, \{1\},\{2\} \}, \subset, \emptyset)$. $\WW_3$ does not satisfy \WLEM, since for $P = P_{\{1\}}$ we have that $\emptyset \nVdash \neg P \vee \neg \neg P$. Like $\WW_2$, a useful variant is $\WW_3^\prime = (\{ \emptyset, \{1\},\{2\} \}, \subset, \menge{\{1\},\{2\}} )$. This model does not force \DGPimp. To see this, assign $P=P_{\{1\}},S=S_{\{2\}}$. Then since $\emptyset \nVdash P\implies S$ and $\emptyset \nVdash S \implies P$, and since both $\{1\}\Vdash \bot$ and $\{2\}\Vdash \bot$, $\emptyset \Vdash \neg(S\implies P)$ and $\emptyset \Vdash \neg(P\implies S)$. But since $\emptyset \nVdash\bot$, we have $\emptyset\nVdash\neg\neg(P \implies S)$. Hence $\WW_3^\prime \nVdash \DGPimp$.

Figure \ref{fig:hasse}  includes the Hasse diagrams of the underlying orders of these models.

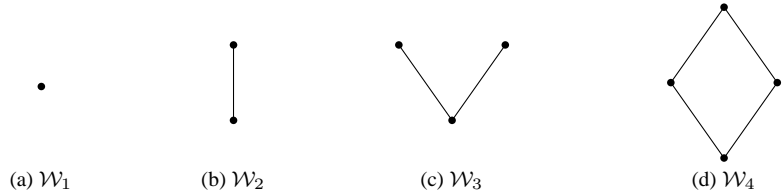
\begin{figure}[ht]
\caption{The Hasse diagrams of the underlying orders of our models
      \label{fig:hasse}}
\floatbox{figure}{%
      }%
    {
\begin{subfloatrow}

\subfloat[$\WW_1$]{

\begin{tikzpicture}
\useasboundingbox (-1,0) rectangle (1,0.1);
  \node [circle,fill=black,inner sep=1pt] at (0,0) {};
\end{tikzpicture}
}

\subfloat[$\WW_2$]{
\begin{tikzpicture}
\useasboundingbox (-1,0) rectangle (1,1);
  \node [circle,fill=black,inner sep=1pt] at (0,0) {};
  \node [circle,fill=black,inner sep=1pt] at (0,1) {};
  \draw (0,0) -- (0,1);
\end{tikzpicture}
}
\subfloat[$\WW_3$]{
\begin{tikzpicture}
\useasboundingbox (-1.7,0) rectangle (1.7,1);
  \node [circle,fill=black,inner sep=1pt] at (0,0) {};
  \node [circle,fill=black,inner sep=1pt] at (-0.7,1) {};
  \node [circle,fill=black,inner sep=1pt] at (0.7,1) {};
  \draw (0,0) -- (-0.7,1);
  \draw (0,0) -- (0.7,1);
\end{tikzpicture}
}
\subfloat[$\WW_4$]{
\begin{tikzpicture}
\useasboundingbox (-1.7,0) rectangle (1.7,2);
  \node [circle,fill=black,inner sep=1pt] at (0,0) {};
  \node [circle,fill=black,inner sep=1pt] at (-0.7,1) {};
  \node [circle,fill=black,inner sep=1pt] at (0.7,1) {};
  \node [circle,fill=black,inner sep=1pt] at (0,2) {};
  \draw (0,0) -- (-0.7,1);
  \draw (0,0) -- (0.7,1);
  \draw (0,2) -- (-0.7,1);
  \draw (0,2) -- (0.7,1);
\end{tikzpicture}
}
\end{subfloatrow}
}\end{figure}

These models, together with their $^\bot$ versions and useful variants (and $\WW_4$; see the next section), show that all the implications in Figure \ref{fig:1} are strict. These models also show that---apart from the transitive closure---no arrows can be added to Figure \ref{fig:1}.  For simplicity we include a table.
\setcounter{figure}{3}
\begin{figure}[ht] \centering 
\begin{tabular}{c|c|c|c|c|c|c|c|c}
& \DNE & \EFQ & \LEM & \DGP & \PP & \WLEM & \WT & \DGPimp \\	\hline
$\WW_1$ & \cmark  & \cmark & \cmark & \cmark & \cmark & \cmark & \cmark  & \cmark \\
$\WW^\bot_1$ & \xmark  & \xmark & \cmark & \cmark & \cmark & \cmark & \cmark  & \cmark \\
$\WW_2$ & \xmark  & \cmark & \xmark & \cmark & \xmark & \cmark & \cmark  & \cmark \\
$\WW_2^\bot$ & \xmark  & \xmark & \cmark & \cmark & \xmark & \cmark & \cmark  & \cmark \\
$\WW_2^\prime$ & \xmark & \xmark & \cmark & \cmark & \xmark & \cmark & \xmark  & \cmark \\
$\WW_3$ & \xmark  & \cmark & \xmark & \xmark & \xmark & \xmark & \cmark & \cmark \\
$\WW_3^\bot$ & \xmark  & \xmark & \cmark & \xmark & \xmark & \cmark & \cmark  & \cmark \\
$\WW_3^\prime$ & \xmark & \xmark & \cmark & \xmark & \xmark & \cmark & \xmark  & \xmark \\
$\WW_4$ & \xmark  & \cmark & \xmark & \xmark & \xmark & \cmark & \cmark & \cmark \\
$\WW_4^\bot$ & \xmark  & \xmark & \cmark & \xmark & \xmark & \cmark  & \cmark  & \cmark
\end{tabular}
\caption{An overview, over all models over minimal logic. See next section for $\WW_4$.}
\end{figure}
Further, note that since every principle fails to be enforced in at least one of the models, none of the principles outlined are provable in minimal logic alone.

\section{The intuitionistic case}\label{sec:intuitionism}

It is well known that:
\begin{Pro} \label{Pro:LEM_and_EFQ_iff_DNE}
	\LEM and \EFQ imply \DNE
\end{Pro}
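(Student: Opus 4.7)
The plan is to argue directly by a short natural deduction. Let $\varphi$ be arbitrary, and suppose $\neg\neg\varphi$. By \LEM applied to $\varphi$, I have $\varphi \vee \neg\varphi$, so I can do a case split.

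In the first case, $\varphi$ holds and I am already done. In the second case, $\neg\varphi$ holds; combining this with the assumption $\neg\neg\varphi$ (which by definition is $\neg\varphi \implies \bot$) via modus ponens yields $\bot$. This is exactly where \EFQ is needed: from $\bot$, an instance of \EFQ gives $\varphi$. Thus in both cases $\varphi$ is derived, so by disjunction elimination $\varphi$ holds. Discharging the assumption $\neg\neg\varphi$ and appealing to the deduction theorem gives $\neg\neg\varphi \implies \varphi$, which is \DNE.

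There is no real obstacle here; the only subtle point is that the step from $\bot$ to $\varphi$ in the second case is precisely the use of \EFQ, without which minimal logic would only deliver $\neg\neg\varphi$ from this case and stall. This observation also explains why the proposition is stated in exactly this form: neither \LEM nor \EFQ alone suffices (as witnessed by the models $\WW_1^\bot$ and $\WW_2$ in the table), while together they collapse to full classical logic.
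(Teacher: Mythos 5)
Your argument is correct and is essentially the paper's own proof, just written out in more detail: both case-split on $\varphi\vee\neg\varphi$ from \LEM and, in the $\neg\varphi$ branch, use the hypothesis $\neg\neg\varphi$ to obtain $\bot$ and then \EFQ to recover $\varphi$. The added remarks about which models witness the necessity of each hypothesis are accurate and consistent with the paper's separation results.
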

\begin{proof}
Let $\varphi$ be such that $\neg \neg \varphi$ holds. Thus if we have $\neg \varphi$ then $\bot$, which by \EFQ implies $\varphi$. Together with \LEM that means that $\varphi \vee \neg \varphi$ implies $\varphi$.	
\end{proof}
Thus under the assumption of \EFQ our hierarchy collapses to the following simple one:
\begin{figure}[ht] \centering
\begin{tikzpicture}[node distance=2 cm, auto]
  \node (DNE2) {\DNE, \PP, \LEM};
  \node (DGP2) [below of=DNE2] {\DGP};
  \node (WLEM2) [below  of=DGP2] {\WLEM};
  \draw[double,->] (DNE2) to node {} (DGP2);
  \draw[double,->] (DGP2) to node {} (WLEM2);
\end{tikzpicture}
\end{figure}

The model $\WW_2$ shows that the first of these implications is strict, however none of the models considered so far satisfy \EFQ and \WLEM, but not \DGP. To do so we consider the  diamond-shaped structure $(\{ \emptyset, \{1\}, \{2\}, \{1,2\} \},\subset, \emptyset)$. As $Q = \emptyset$ this is a model of \EFQ. As $\WW_4$ is not $\textsf{v}$-free it does not satisfy \DGP (Proposition \ref{Pro:models_char_vfree}).

Finally we need to check that \WLEM holds. So let $P$ be an arbitrary propositional symbol. If $P \notin v(\{1,2\})$ then $\WW_4 \Vdash P \implies \bot$, since $P$ is never forced. If $P \in v(\{1,2\})$ then $P \implies \bot$ is never forced in any world, so $\WW_4 \Vdash \neg \neg P$ vacuously. In both cases $\WW_4 \Vdash \neg P \vee \neg \neg P$.

\section{Open Questions and Concluding Remarks}

\subsection{Statement \texorpdfstring{$\ref{PMI7}^\implies$}{7->}}

As shown in Section \ref{Sec:impl_fragment} we have 
\[ \WT \mimplies \ref{PMI7}^\implies  \mimplies \DGPimp \ .  \]
One can check that $\ref{PMI7}^\implies$ holds in exactly the models discussed which validate \DGPimp, so it is provably weaker than \WT and one would naturally conjecture that it is equivalent to that statement. However we could not find a proof of $\DGPimp \mimplies \ref{PMI7}^\implies$. It is also interesting that we could not find a proof of the converse that does not rely on $\wedge$.

\subsection{Kreisel Putnam and Scott logic}
In the  fabulously  titled (even for German speakers) paper \cite{gKhP57} it is shown that the formula
\begin{equation} \label{KP} \tag{\KP} 
(\neg \varphi \implies \psi \vee \vartheta) \implies (\neg \varphi \implies \psi) \vee (\neg \varphi \implies \vartheta) 
\end{equation}

is not derivable in intuitionistic logic. One can check that it holds in exactly the same of the models discussed as \DGPimp. It is completely unclear, though, whether either one implies the other. In the same paper \cite{gKhP57} also the following formula, which is not derivable intuitionistically, and which is due to Dana Scott is mentioned 
\[ ((\neg \neg \varphi \implies \varphi) \implies (\varphi \vee \neg \varphi)) \implies (\neg \neg \varphi \vee \neg \varphi).  \]
This formula is clearly implied by \WLEM. It is, however, weaker, since it actually holds in all models we have considered in this paper. It would also be interesting to have intuitionistic models (i.e.\ ones validating \EFQ) rejecting these principles.

\subsection{SmL}
It is also worth mentioning another important formula characterising an superintuitionistic logic, the so called Smetanich's logic \cite{fW07}. It is obtained by adding the following axiom scheme to intuitionistic logic.
\begin{equation} \tag{\SmL} \label{SmL} 
 (\neg \psi \implies \varphi) \implies (((\varphi \implies \psi) \implies \varphi) \implies \varphi) \ . 
 \end{equation}
It is easily seen that this is implied by \LEM. It also implies \WLEM which we can see if we apply it to $\neg \varphi \vee \neg \neg \varphi$ and $\neg \varphi$.
However we can check that it holds in $\WW_2$ and does not hold in $\WW_3^\prime$ which means that it neither implies \LEM nor is it implied by or implies \DGP (and therefore neither by \WLEM). That means we have the following extension to the left bottom corner of Figure \ref{fig:1}.

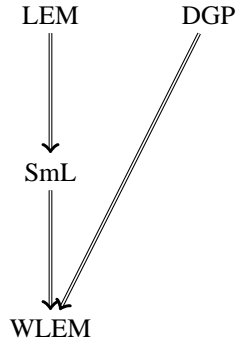
\begin{figure}[ht]\centering
\begin{tikzpicture}[node distance=.1\textheight, auto]
  \node (LEM) {\LEM};

  \node (DGP) [right of=LEM] {\DGP};
  \node (SmL) [below of=LEM] {\SmL};

  \node (WLEM) [below  of=SmL] {\WLEM};

  \draw[double,->] (LEM) to node {} (SmL);
  \draw[double,->] (SmL) to node {} (WLEM);
  \draw[double,->] (DGP) to node {} (WLEM);
\end{tikzpicture}
\caption{A small extension in the lower left corner of Figure \ref{fig:1}.}
\end{figure}

It is not clear, however, how \SmL relates to \DGP intuitionistically. One can see that \DGP does not imply \SmL even under the assumption of \EFQ, since the former holds in the model $(\WW_5,\emptyset, \leqslant)$, where $\WW_5 = (\{1,2,3 \})$ and $\leqslant$ is the usual order.

\subsection{Concluding Remarks}

The distinctions and equivalences in this paper represent, of course, only the tip of the proverbial iceberg. Beyond what can be distinguished in other non-classical logics, of special mention are substructural logics \cite{fP13}. The proofs presented here assume all the usual structural rules. An analysis of which proofs still go through in the various substructural logics will clearly shed further light on computational aspects.

\begin{bibdiv}    
\begin{biblist}

\bib{dA87}{book}{
      author={Adams, D.},
       title={Dirk Gently's Holistic Detective Agency},
   publisher={UK: William Heinemann Ltd.},
        date={1987},
        ISBN={0671692674},
}

\bib{aA75}{book}{
  author    = {Alan Ross Anderson and Nuel D.~Belnap},
  title     = {Entailment: The Logic of Relevance and Necessity},
  publisher = {Princeton University Press},
  year    = {1975},
  volume    = {1},
  address   = {Princeton},
}

\bib{aA89}{article}{
  crossref = {gP89},
  author    = {A. I. Arruda},
  title     = {Aspects of the Historical Development of Paraconsistent Logic},
  note = {In Priest, Routley \& Norman, \emph{Paraconsistent Logic: Essays on the Inconsistent}},
   pages     = {99--130}
}

\bib{nB77}{article}{
  author    = {Nuel D. Belnap},
  title     = {A Useful Four-Valued Logic},
 note = {In Dunn \& Epstein (eds.), \emph{Modern Uses of Multiple-Valued Logics}},
  year      = {1977},
  pages     = {8--37},
}

\bib{sH96}{book}{
  author    = {Susan Haack},
  title     = {Deviant Logic, Fuzzy Logic: Beyond the Formalism},
  publisher = {Cambridge University Press},
  year      = {1996}
}

\bib{hI06a}{article}{
      author={Ishihara, Hajime},
       title={Reverse mathematics in Bishop's constructive mathematics},
        date={2006},
     journal={Philosophia Scienti{\ae}},
      volume={Cahier sp{\'e}cial 6},
       pages={43\ndash 59},
}

\bib{hI16}{article}{
      author={Ishihara, Hajime},
      author={Schwichtenberg, Helmut},
       title={Embedding classical in minimal implicational logic},
        date={2016},
        ISSN={1521-3870},
     journal={Mathematical Logic Quarterly},
       pages={n/a\ndash n/a},
         url={http://dx.doi.org/10.1002/malq.201400099},
}

\bib{gKhP57}{article}{
      author={Kreisel, G.},
      author={Putnam, H.},
       title={Eine Unableitbarkeitsbeweismethode f{\"u}r den intuitionistischen Aussagenkalk{\"u}l},
        date={1957},
        ISSN={1432-0665},
     journal={Archiv f{\"u}r mathematische Logik und Grundlagenforschung},
      volume={3},
      number={3},
       pages={74\ndash 78},
         url={http://dx.doi.org/10.1007/BF01988049},
}

\bib{sO08}{book}{
      author={Odintsov, S.},
       title={Constructive negations and paraconsistency},
      series={Trends in Logic},
   publisher={Springer Netherlands},
        date={2008},
        ISBN={9781402068676},
         url={https://books.google.co.nz/books?id=hHUQOAiYdT4C},
}

\bib{fP13}{book}{
      author={Paoli, F.},
       title={Substructural logics: A primer},
      series={Trends in Logic},
   publisher={Springer Netherlands},
        date={2013},
        ISBN={9789401731799},
         url={https://books.google.de/books?id=RkPsCAAAQBAJ},
}

\bib{gP89}{book}{
  editor    = {Graham Priest and Richard Routley and Jean Norman},
  title     = {Paraconsistent Logic: Essays on the Inconsistent},
  publisher = {Philosophia Verlag},
  address = {Munich},
  year      = {1989},
}

\bib{rS95b}{book}{
      author={Sainsbury, R.M.},
       title={Paradoxes},
   publisher={Cambridge University Press},
        date={1995},
        ISBN={9780521483476},
         url={https://books.google.co.nz/books?id=OOYooFuFlOQC},
}

\bib{wV75}{article}{
      author={Veldman, Wim},
       title={An intuitionistic completeness theorem for intuitionistic
  predicate logic},
        date={1976},
        ISSN={00224812},
     journal={The Journal of Symbolic Logic},
      volume={41},
      number={1},
       pages={159\ndash 166},
         url={http://www.jstor.org/stable/2272955},
}

\bib{vV13}{thesis}{
      author={Valencia~Vizca{\'\i}no, Pedro~F.},
       title={Some uses of cut elimination},
        type={Ph.D. Thesis},
        year={2013},
}

\bib{fW07}{incollection}{
      author={Wolter, Frank},
      author={Zakharyaschev, Michael},
       title={Modal decision problems},
        date={2007},
   booktitle={Handbook of modal logic},
      editor={Patrick~Blackburn, Johan Van~Benthem},
      editor={Wolter, Frank},
      series={Studies in Logic and Practical Reasoning},
      volume={3},
   publisher={Elsevier},
       pages={427 \ndash  489},
  url={http://www.sciencedirect.com/science/article/pii/S1570246407800103},
}

\bib{kW01}{thesis}{
      author={Weich, Klaus},
       title={Improving proof search in intuitionistic propositional logic},
        type={Ph.D. Thesis},
        date={2001},
}

\end{biblist}
\end{bibdiv}

\end{document}